\documentclass[11pt,a4j]{article}

\usepackage{lineno,hyperref}
 \usepackage{amsmath,amssymb,amsfonts}
 \usepackage{amsthm}
 \usepackage{latexsym}
 \usepackage{array}

\theoremstyle{definition} 
\newtheorem{theorem}{Theorem}[section]
\newtheorem{corollary}[theorem]{Corollary}
\newtheorem{prop}[theorem]{Proposition}
\newtheorem{lemma}[theorem]{Lemma}
\newtheorem{rem}{Remark}
\newtheorem{example}{Example}
\newtheorem{defn}{Definition}

\newcolumntype{I}{!{\vrule width 1.5pt}}
\newlength\savedwidth
\newcommand{\wcline}[1]{\noalign{\global\savedwidth\arrayrulewidth\global\arrayrulewidth 2pt} \cline{#1}
\noalign{\global\arrayrulewidth\savedwidth}}

\allowdisplaybreaks[1] 

\title{Pseudo-Poisson Nijenhuis manifolds}
\author{Tomoya Nakamura  \thanks{Department of Mathematics, Waseda University, \textup{3-4-1}, Okubo, Shinjuku-ku, Tokyo, Japan}
 \\ email: \href{mailto:x-haze@ruri.waseda.jp}{x-haze@ruri.waseda.jp}}
\date{\today}

\begin{document}

  \maketitle
  
  \begin{abstract}
  We introduce the notion of pseudo-Poisson Nijenhuis manifolds. These manifolds are generalizations of Poisson Nijenhuis manifolds defined by Magri and Morosi \cite{MM}. We show that there is a one-to-one correspondence between the pseudo-Poisson Nijenhuis manifolds and some quasi-Lie bialgebroid structures on the tangent bundle as in the case of Poisson Nijenhuis manifolds by Kosmann-Schwarzbach \cite{K}. For that reason, we expand the general theory of the compatibility of a $2$-vector field and a $(1,1)$-tensor. In the case of pseudo-Poisson Nijenhuis structures having some ``nondegeneracy'', we call structures corresponding to such structures pseudo-symplectic Nijenhuis structures, and investigate properties of those. In particular, we show that those structures induce twisted Poisson structures \cite{SW}.
  \end{abstract}

 \section{Introduction}
Poisson Nijenhuis structures were defined by Magri and Morosi \cite{MM} to study bi-Hamiltonian systems. A pair of a Poisson structure $\pi $ and a Nijenhuis structure $N$ on a $C^\infty$-manifold $M$ is a Poisson Nijenhuis structure on $M$ if those have some compatibility condition. It is known that Poisson Nijenhuis manifolds (i.e., manifolds with Poisson Nijenhuis structures) are related with various mathematical objects \cite{MM}, \cite{KMa}, \cite{K}.

Kosmann-Schwarzbach \cite{K} showed that there is a one-to-one correspondence between the Poisson Nijenhuis manifolds $(M, \pi ,N)$ and the Lie bialgebroids $((TM)_N,(T^*M)_\pi )$, where $(TM)_N$ is a Lie algebroid deformed by the Nijenhuis structure $N$ and $(T^*M)_\pi $ is the cotangent bundle equipped with the standard Lie algebroid structure induced by the Poisson structure $\pi $. On the other hand, Sti$\acute{\mathrm{e}}$non and Xu \cite{SX} introduced the concept of a Poisson quasi-Nijenhuis manifold $(M, \pi ,N,\phi )$, and showed that a Poisson quasi-Nijenhuis manifold corresponded to a quasi-Lie bialgebroid $((T^*M)_\pi , d_N ,\phi )$. Here a Lie bialgebroid \cite{MX}, \cite{MX2} consists of a pair $(A,A^*)$, where $A$ is a Lie algebroid, and $A^*$ is the dual bundle equipped with a Lie algebroid structure, together with the following condition: for any $D_1$ and $D_2$ in $\Gamma (\Lambda ^*A)$,
\begin{eqnarray}\label{Gerstenhaber}
d_*[D_1,D_2]_A=[d_* D_1, D_2]_A+(-1)^{\mathrm{deg}D_1+1}[D_1, d_* D_2]_A,
\end{eqnarray}
where a bracket $[\cdot ,\cdot ]_A$ is the Schouten bracket of the Lie bracket of $A$, and $d_*$ is the Lie algebroid differential determined from the Lie algebroid structure of $A^*$ \cite{LWX}. Since the Lie algebroid structure on $A^*$ can be recovered from the derivation $d_*$, a Lie bialgebroid $(A,A^*)$ is also denoted by $(A,d_*)$. A quasi-Lie bialgebroid \cite{ILX} is a Lie algebroid $(A, [\cdot ,\cdot ]_A, a)$ equipped with a degree-one derivation $d_*$ of the Gerstenhaber algebra $(\Gamma (\Lambda ^*A), \wedge ,[\cdot ,\cdot]_A)$, i.e., $d_*$ satisfies (\ref{Gerstenhaber}), and a 3-section of $A$, $\phi _A$ in $\Gamma (\Lambda ^3A)$ such that $d_*^2=[\phi _A, \cdot ]_A$ and $d_*\phi _A=0$.

Our main purposes in this paper are to define a pseudo-Poisson Nijenhuis manifold $(M, \pi ,N,\Phi )$ and to show that there is a one-to-one correspondence between the pseudo-Poisson Nijenhuis manifolds $(M, \pi ,N,\Phi )$ and the quasi-Lie bialgebroids $((TM)_N, d_\pi ,\Phi )$. A quasi-Lie bialgebroid $((TM)_N, d_\pi ,\Phi )$ is, so to speak, ``the opposite side'' of a quasi-Lie bialgebroid $((T^*M)_\pi , d_N ,\phi )$.
Here $d_N$ and $d_\pi $ are operators of $\Omega ^*(M):=\Gamma (\Lambda ^*T^*M)$ and $\mathfrak{X}^*(M):=\Gamma (\Lambda ^*TM)$ determined from a $2$-vector field $\pi $ and a $(1,1)$-tensor $N$, respectively. A {\it pseudo-Poisson Nijenhuis structure} on $M$ is a triple consisting of a $2$-vector field $\pi $ which does not need to be a Poisson structure, a Nijenhuis structure $N$ ``compatible'' with $\pi $ and a $3$-vector field $\Phi $ with conditions
\begin{eqnarray*}
&\mathrm{(i)}& [\pi ,\Phi ]=0,\\
&\mathrm{(ii)}& \frac{1}{2}\iota _{\alpha \wedge \beta }\left[\pi ,\pi \right]=N\iota _{\alpha \wedge \beta }\Phi,\\
&\mathrm{(iii)}& N\iota _{\alpha \wedge \beta }\mathcal{L}_X\Phi -\iota _{\alpha \wedge \beta }\mathcal{L}_{NX}\Phi -\iota _{(\mathcal{L}_XN^*)(\alpha \wedge \beta )}\Phi =0
\end{eqnarray*}
for any $X$ in $\mathfrak{X}(M)$ and $\alpha$ and $\beta $ in $\Omega ^1(M)$, where $\iota _{\alpha \wedge \beta }:=\iota _\beta \iota _\alpha $ and $\iota _{(\mathcal{L}_XN^*)(\alpha \wedge \beta )}:=\iota _{(\mathcal{L}_XN^*)\alpha \wedge \beta }+\iota _{\alpha \wedge (\mathcal{L}_XN^*)\beta }$.

Furthermore, since quasi-Lie bialgebroids (of course, Lie bialgebroids also) construct Courant algebroids \cite{LWX}, \cite{R}, \cite{R2}, we can obtain a new Courant algebroid structure on $TM\oplus T^*M$ from a pseudo-Poisson Nijenhuis structure on $M$ similar to a Poisson Nijenhuis and a Poisson quasi-Nijenhuis structure on $M$. In other words, we can say that a pseudo-Poisson Nijenhuis structure is a new material for constructing a Courant algebroid structure on $TM\oplus T^*M$. Therefore a pseudo-Poisson Nijenhuis structure on $M$ complements the bottom left of the correspondence table below:

\begin{center}
\begin{tabular}{IcIc|c|} \hline
\multicolumn{3}{|c|}{a Courant algebroid structure \cite{LWX} on $TM\oplus T^*M$}\\ \hline
\multicolumn{1}{|c|}{a quasi-Lie bialgebroid \cite{R}}    & a Lie bialgebroid \cite{MX}       & a quasi-Lie bialgebroid \cite{R}\\
\multicolumn{1}{|c|}{$((TM)_N, d_\pi ,\Phi )$}    & $((TM)_N,(T^*M)_\pi )$   & $((T^*M)_\pi , d_N ,\phi )$ \\ \wcline{1-1}\cline{2-3}
a pseudo-Poisson Nijenhuis  & a Poisson Nijenhuis \cite{MM}     & a Poisson quasi-Nijenhuis \cite{SX}\\
$(\pi ,N, \Phi )$           & $(\pi ,N)$               & $(\pi ,N,\phi )$ \\
$\pi :$ a $2$-vector field  & $\pi :$ a Poisson        & $\pi :$ a Poisson \\
$N:$ a Nijenhuis            & $N:$ a Nijenhuis         & $N:$ a bundle map \\
$\Phi :$ a $3$-vector field &                          & $\phi :$ a $3$-form \\ \wcline{1-1}\cline{2-3}
\end{tabular}
\end{center} 

All of the pairs $(\pi ,N)$ of the bottom of the correspondence table above are ``compatible''. The condition that a $2$-vector field $\pi $ and a $(1,1)$-tensor $N$ on $M$ are compatible is very important in studying Poisson Nijenhuis, pseudo-Poisson Nijenhuis and Poisson quasi-Nijenhuis manifolds. In this paper, we generalize several properties related to the compatibility so that they can be used with as few assumptions as possible. For example, Poisson Nijenhuis hierarchy \cite{MM2}, \cite{KMa} and a relation with a brackets on the tangent and the cotangent bunble \cite{SX}, \cite{K} and so on.


Under the assumption that a $2$-vector field $\pi $ is nondegenerate, we can reduce one of the conditions for a triple $(\pi ,N,\Phi )$ to be a pseudo-Poisson Nijenhuis structure. In this case, since there is a unique nondegenerate $2$-form $\omega $ corresponding to $\pi $, we can rewrite the definition of pseudo-Poisson Nijenhuis structures by words of the differential forms. Therefore we obtain the definition of pseudo-stmplectic Nijenhuis structures as an equivalent structures to pseudo-Poisson Nijenhuis structures of which the $2$-vector field is nondegenerate:

\begin{defn}
Let $M$ be a $C^\infty $-manifold, $\omega $ a nondegenerate $2$-form on $M$, a $(1,1)$-tensor $N$ a Nijenhuis structure on $M$ compatible with $\pi $ corresponding to $\omega $, and $\phi $ a closed $3$-form on $M$. Then a triple $(\omega , N, \phi )$ is a {\it pseudo-symplectic Nijenhuis structure} on $M$ if the following holds:
\begin{eqnarray*}
\iota _{X\wedge Y}d\omega=N^*\iota _{X\wedge Y}\phi\ (X,Y \in \mathfrak{X}(M)).
\end{eqnarray*}
\end{defn}
\noindent
Moreover we show that pseudo-symplectic Nijenhuis structures $(\omega  ,N,\phi )$ induce twisted Poisson structures $(\pi _N, \phi )$ \cite{SW}. The property can be considered to be a generalization of the first step of the hierarchy of a Poisson Nijenhuis structure since a pair $(\pi _N, N)$ is compatible.

This paper is constructed as follows. We recall the definitions of Courant algebroids and quasi-Lie bialgebroids in section \ref{Preliminaries}. In section \ref{Compatible pairs}, we expand a general theory of the compatibility of a $2$-vector field and a $(1,1)$-tensor. This also plays an important role to study Poisson Nijenhuis, pseudo-Poisson Nijenhuis and Poisson quasi-Nijenhuis structures uniformly. In section \ref{Pseudo-Poisson Nijenhuis manifolds}, we define pseudo-Poisson Nijenhuis manifolds and show that there is a one-to-one correspondence between a pseudo-Poisson Nijenhuis manifold $(M, \pi ,N,\Phi )$ and a quasi-Lie bialgebroid $((TM)_N, d_\pi ,\Phi )$, which is the main theorem in this paper. It is the contents of section \ref{Pseudo-symplectic Nijenhuis manifolds} to define pseudo-symplectic Nijenhuis structures, 
and to investigate properties of those. In particular, we show that a pseudo-symplectic Nijenhuis structure induces a twisted Poisson structure \cite{SW}.

 \section{Preliminaries}\label{Preliminaries}
 We begin with recalling the definitions of Courant algebroids.
 
\begin{defn}[\cite{LWX}]
A {\it Courant algebroid} is a vector bundle $E\longrightarrow M$ equipped with a nondegenerate symmetric bilinear form $\langle \cdot ,\cdot \rangle $ (called the {\it pairing}) on the bundle, a skew-symmetric bracket $[\![\cdot ,\cdot ]\!]$ on $\Gamma (E)$ and a bundle map $\rho :E\longrightarrow TM$ such that the following properties are satisfied: for any $e, e_1, e_2, e_3$ in $\Gamma(E)$, any $f$ and $g$ in $C^\infty (M)$, 
\begin{enumerate}
\item[{\rm (i)}] $\sum_{\mathrm{Cycl}(e_1,e_2,e_3)}^{}[\![[\![e_1,e_2]\!],e_3]\!]=\frac{1}{3}\sum_{\mathrm{Cycl}(e_1,e_2,e_3)}^{}\mathcal{D}\langle [\![e_1,e_2]\!],e_3\rangle ;$
\item[{\rm (ii)}] $\rho ([\![e_1,e_2]\!])=[\rho(e_1), \rho(e_2)];$
\item[{\rm (iii)}] $[\![e_1,fe_2]\!]=f[\![e_1,e_2]\!]+(\rho(e_1)f)e_2-\langle e_1,e_2 \rangle \mathcal{D}f;$
\item[{\rm (iv)}] $\rho \circ \mathcal{D}=0,$ {\rm i.e.,} $\langle \mathcal{D}f,\mathcal{D}g\rangle =0 ;$
\item[{\rm (v)}] $\rho (e)\langle e_1,e_2\rangle =\langle [\![e,e_1]\!]+\mathcal{D}\langle e,e_1\rangle ,e_2\rangle +\langle e_1, [\![e,e_2]\!]+\mathcal{D}\langle e,e_2\rangle \rangle,$
\end{enumerate}
where $\mathcal{D}:C^\infty (M)\longrightarrow \Gamma(E)$ is the smooth map defined by\\
$$\langle \mathcal{D}f, e\rangle =\frac{1}{2}\rho (e)f.$$
The map $\rho $ and the operator $[\![\cdot , \cdot ]\!]$ are called an {\it anchor map} and a {\it Courant bracket}, respectively.
\end{defn}

A Courant algebroid is not a Lie algebroid since the Jacobi identity is not satisfied due to (i). The following example is fundamental.

\begin{example}[\cite{LWX}] \label{std-C}
The direct sum $TM\oplus T^*M$ on a $C^\infty $-manifold $M$ is a Courant algebroid. Here the anchor map $\rho $, the pairing $\langle \cdot ,\cdot \rangle $ and the Courant bracket $[\![\cdot ,\cdot ]\!]$ are given by
\begin{eqnarray}
&\rho (X+\xi )=X,\\
&\langle X+\xi , Y+\eta \rangle =\frac{1}{2}(<\xi ,Y>+<\eta ,X>), \label{pairing}\\
&[\![X+\xi ,Y+\eta ]\!]=[X,Y]+\mathcal{L}_X\eta -\mathcal{L}_Y\xi +\frac{1}{2}d(<\xi ,Y>-<\eta ,X>),
\end{eqnarray}
where $X$ and $Y$ are in $\mathfrak{X}(M)$, and $\xi $ and $\eta $ are in $\Omega ^1(M)$. This is called {\it the standard Courant algebroid}.
\end{example}

Next we shall recall the definition of quasi-Lie bialgebroids.

\begin{defn}[\cite{R}]
A {\it quasi-Lie bialgebroid} is a Lie algebroid $(A, [\cdot ,\cdot ]_A, a)$ equipped with a degree-one derivation $d_*$ of the Gerstenhaber algebra $(\Gamma (\Lambda ^*A), \wedge ,[\cdot ,\cdot]_A)$ and a 3-section of $A$, $\phi _A$ in $\Gamma (\Lambda ^3A)$ such that
\begin{eqnarray}
&d_*^2=[\phi _A, \cdot ]_A,\\
&d_*\phi _A=0.
\end{eqnarray}
\end{defn}

If the 3-section $\phi _A$ is equal to $0$, the quasi-Lie bialgebroid $(A, d_*,\phi _A)$ is just a Lie bialgebroid $(A, d_*)$.

\begin{example}[\cite{R}, \cite{R2}]\label{qLbia-to-Ca}
Let $(A, d_*,\phi _A)$ be a quasi-Lie bialgebroid, where $A=(A, [\cdot ,\cdot]_A, a)$, and $d_A: \Gamma (\Lambda ^*A^*)\rightarrow \Gamma (\Lambda ^{*+1}A^*)$ be the Lie algebroid derivative of $A$. Its double $E=A\oplus A^*$ has naturally a Courant algebroid structure. Namely, it is equipped with an anchor map $\rho $, a pairing $\langle \cdot ,\cdot \rangle $ and a Courant bracket $[\![\cdot , \cdot ]\!]$ given by the following: for any $X,Y$ in $\Gamma (A)$, any $\xi $ and $\eta $ in $\Gamma (A^*)$,
\begin{eqnarray*}
&\rho (X+\xi )=a(X)+a_*(\xi ),\\
&\langle X+\xi , Y+\eta \rangle =\frac{1}{2}(<\xi ,Y>+<\eta ,X>),\\
&[\![X,Y]\!]=[X,Y]_A\\
&[\![\xi ,\eta ]\!]=[\xi ,\eta ]_{A^*}+\phi _A(X,Y,\cdot )\\
&[\![X,\xi ]\!]=(\iota _Xd_A\xi +\frac{1}{2}d_A<\xi ,X>)\\
&\quad \quad \quad \quad \quad \quad -(\iota _\xi d_*X +\frac{1}{2}d_*<\xi ,X>),
\end{eqnarray*}
where the map $a_*:A^*\longrightarrow TM$ and the bracket $[\cdot ,\cdot ]_{A^*}$ are defined by
\begin{eqnarray*}
&a_*(\xi )f:=<\xi ,d_*f>,\\
&<[\xi, \eta ]_{A^*},X>:=a_*(\xi )<\eta ,Y>-a_*(\eta )<\xi ,X>-(d_*X)(\xi ,\eta ),
\end{eqnarray*}
respectively and $\iota _X$ and $\iota _\xi $ are the interior products defined by $\iota _X\zeta :=\zeta (X, \dots )$ and $\iota _\xi D:=D(\xi, \dots)$, respectively for any $\xi , \eta$ in $\Gamma (A^*)$, $X,Y$ in $\Gamma(A)$, $\zeta $ in $\Gamma (\Lambda ^*A^*)$, $D$ in $\Gamma(\Lambda ^*A)$ and $f$ in $C^\infty (M)$.

Taking $\phi _A=0$, we obtain the Courant algebroid structure of a double of a Lie bialgebroid in \cite{LWX}.
\end{example}

 \section{Compatible pairs}\label{Compatible pairs}

In this section, we consider the compatibility of a $2$-vector field and a $(1,1)$-tensor on a $C^\infty $-manifold, which plays an important role to define a Poisson Nijenhuis and a pseudo-Poisson Nijenhuis manifold. For that reason, first we begin with the definitions and properties of brackets defined by a $2$-vector field and a $(1,1)$-tensor. We generalize several properties of a Poisson Nijenhuis structure to that of a compatible pair of a $2$-vector field and a $(1,1)$-tensor. 
Moreover we show that the brackets gives a characterization of the compatibility of a $2$-vector field and a $(1,1)$-tensor, which is the main theorem of this section.

Let $M$ be a $C^\infty $-manifold, $\pi $ a $2$-vector field and $N$ a $(1,1)$-tensor. We define, for any $\alpha ,\beta $ in $\Omega ^1(M)$ and $X,Y$ in $\mathfrak{X}(M),$
\begin{eqnarray}
&[\alpha ,\beta ]_\pi :=\mathcal{L}_{\pi ^\sharp \alpha }\beta -\mathcal{L}_{\pi ^\sharp \beta }\alpha -d<\pi ^\sharp \alpha ,\beta >,\\
&[X,Y]_N:=[NX,Y]+[X,NY]-N[X,Y],
\end{eqnarray}
where $\pi ^\sharp :T^*M\longrightarrow TM$ is the bundle map over $M$ defined by $<\pi ^\sharp \alpha ,\beta>:=\pi (\alpha ,\beta )$. It is easy to see that these brackets are bilinear and anti-symmetry. Moreover these satisfy the Leibniz rule, i.e., for any $f$ in $C^\infty (M), \alpha ,\beta $ in $\Omega ^1(M)$ and $X,Y$ in $\mathfrak{X}(M),$
\begin{eqnarray}
&[\alpha ,f\beta ]_\pi =((\pi^\sharp \alpha )f)Y+f[\alpha ,\beta ]_\pi ,\label{pi_Leibniz}\\
&[X,fY]_N=((NX)f)Y+f[X,Y]_N.\label{N_Leibniz}
\end{eqnarray}
From this, we obtain the derivation $d_\pi :\mathfrak{X}^*(M)\longrightarrow \mathfrak{X}^{*+1}(M)$ defined by
\begin{eqnarray}\label{gaibibun}
(d_{\pi } D)(\alpha _0, \dots , \alpha _k)&=&\sum_{i=0}^k(-1)^i\pi ^\sharp (\alpha _i)(D(\alpha _0,\dots ,\hat{\alpha _i},\dots ,\alpha _k))\nonumber \\
&&\!\!\!\!+\sum _{i<j}^{}(-1)^{i+j}D([\alpha _i,\alpha _j]_\pi , \alpha _0,\dots ,\hat{\alpha _i},\dots ,\hat{\alpha _j},\dots ,\alpha _k),\nonumber \\
\end{eqnarray}
where $D$ is in $\mathfrak{X}^k(M)$ and $\alpha _i$'s are in $\Omega ^1(M)$. By replacing $\pi ^\sharp $ and $[\cdot ,\cdot ]_\pi $ with $N$ and $[\cdot ,\cdot ]_N$ respectively, the derivation $d_N:\Omega ^*(M)\longrightarrow \Omega ^{*+1}(M)$ is also defined similarly. Then for any $D$ in $\mathfrak{X}^k(M)$, it follows that $d_\pi D=[\pi , D]$. Furthermore the Lie derivative $\mathcal{L}_\alpha ^\pi$ and $\mathcal{L}_X^N$ are defined by the Cartan formula
\begin{eqnarray}
\mathcal{L}_\alpha ^\pi:=d_\pi \iota _\alpha +\iota _\alpha d_\pi ,\ \ \ \ \ \mathcal{L}_X^N:=d_N \iota _X +\iota _X d_N 
\end{eqnarray}
for any $\alpha $ in $\Omega ^1(M)$ and $X$ in $\mathfrak{X}(M)$ and are extended on $\mathfrak{X}^*(M)$ and $\Omega ^*(M)$ the same as the usual Lie derivative $\mathcal{L}_X$ respectively. Then it follows that
$$\mathcal{L}_\alpha ^\pi \beta =[\alpha ,\beta ]_\pi ,\ \ \ \ \ \mathcal{L}_X^NY=[X,Y]_N.$$

\begin{rem}\label{Lie algebroid}
The above brackets are {\it not} Lie brackets in general. The bracket $[\cdot ,\cdot ]_\pi $ is a Lie bracket on $\Omega ^1(M)$ if and only if the 2-vector field $\pi $ on $M$ is a {\it Poisson structure}, i.e., $[\pi ,\pi ]=0$. Then the cotangent bundle $(T^*M)_\pi :=(T^*M,[\cdot ,\cdot ]_\pi ,\pi ^\sharp )$ is a Lie algebroid. The bracket $[\cdot ,\cdot ]_N$ is a Lie bracket on $\mathfrak{X}(M)$ if and only if $N$ is a {\it Nijenhuis structure} on $M$, i.e., the Nijenhuis torsion
$$\mathcal{T}_N(X,Y):=[NX,NY]-N[X,Y]_N$$
vanishes for any $X$ and $Y$ in $\mathfrak{X}(M)$. Then the tangent bundle $(TM)_N=(TM,[\cdot ,\cdot ]_N,N)$ is a Lie algebroid.
\end{rem}

By observing carefully the ploof of the existence and uniqueness theorem of the Schouten bracket of the usual bracket for vector fields (for example, see \cite{MR}), we can show that a similar one is also constructed in the following situation:

\begin{theorem}\label{Schouten}
Let $(A,a)$ be an {\it anchored vector bundle} over $M$, i.e., $a:A\longrightarrow TM$ is a bundle map over $M$, and $[\cdot ,\cdot ]_A$ a anti-symmetric bilinear bracket on $\Gamma(A)$ satisfying
\begin{eqnarray}
[X,fY]_A=(a(X)f)Y+f[X,Y]_A
\end{eqnarray}
for any $X, Y$ in $\Gamma(A)$ and $f$ in $C^\infty (M)$. Then there is a unique bilinear operator $[\cdot ,\cdot ]_A:\Gamma(\Lambda ^*A)\times \Gamma(\Lambda ^*A)\longrightarrow \Gamma(\Lambda ^*A)$, called {\it the generalized Schouten bracket} or simply {\it the Schouten bracket}, that satisfies the following properties: 
\begin{enumerate}
\item[(i)] It is a biderivation of degree $-1$, that is, it is bilinear,
\begin{eqnarray}
\mbox{deg}[D_1,D_2]_A=\mbox{deg}D_1+\mbox{deg}D_2-1,
\end{eqnarray}
and
\begin{eqnarray}
[D_1,D_2\wedge D_3]_A\!\!\!&=&\![D_1,D_2]_A\wedge D_3\\
&&\ \ +(-1)^{\left(\mbox{deg}D_1+1\right)\mbox{deg}D_2}D_2\wedge [D_1, D_3]_A.
\end{eqnarray}
for $D_i$ in $\Gamma (\Lambda ^*A)$,
\item[(ii)] It is determined on $C^\infty (M)$ and $\Gamma(A)$ by
\begin{enumerate}
\item[(a)]$[f,g]_A=0\ (f,g \in C^\infty(M));$
\item[(b)]$[X,f]_A=a(X)f\ (f \in C^\infty (M), X\in \Gamma (A));$
\item[(c)]$[X,Y]_A\ (X,Y\in \Gamma(A))$ is the original bracket on $\Gamma(A)$.
\end{enumerate}
\item[(iii)] $[D_1,D_2]_A=(-1)^{\mbox{deg}D_1\mbox{deg}D_2}[D_2,D_1]_A$.
\end{enumerate}
\end{theorem}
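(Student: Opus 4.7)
The plan is to adapt the classical Koszul-style construction of the Schouten–Nijenhuis bracket on multivector fields (as presented, e.g., in \cite{MR}), exploiting the observation that that argument never invokes the Jacobi identity of the base bracket—only antisymmetry and the anchored Leibniz rule. Both of these are hypotheses here, so the classical construction transfers essentially verbatim; what is being asserted is the extension of the bracket as a biderivation, not a graded Gerstenhaber structure.

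I would treat \emph{uniqueness} first. Given any operator satisfying (i), (ii), (iii), repeated application of the biderivation identity in (i) reduces the bracket $[X_1\wedge\cdots\wedge X_p,\,Y_1\wedge\cdots\wedge Y_q]_A$ of decomposable sections to an $\mathbb{R}$-linear combination of wedge products whose only non-wedge factors are $[X_i,Y_j]_A$ or, when a function is present, $a(X_i)g$. These elementary pieces are fixed by (ii), and their relative signs by (iii). Since $\Gamma(\Lambda^*A)$ is locally generated over $C^\infty(M)$ by decomposable sections, this determines the operator globally.

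For \emph{existence}, I would define the bracket on decomposable sections by
\begin{eqnarray*}
&&[X_1\wedge\cdots\wedge X_p,\,Y_1\wedge\cdots\wedge Y_q]_A\\
&&\quad := \sum_{i,j}(-1)^{i+j}\,[X_i,Y_j]_A \wedge X_1\wedge\cdots\widehat{X_i}\cdots\wedge X_p \wedge Y_1\wedge\cdots\widehat{Y_j}\cdots\wedge Y_q,
\end{eqnarray*}
together with $[f,g]_A:=0$ and the function-factor formula forced by (i), namely $[f, X_1\wedge\cdots\wedge X_p]_A = \sum_i (-1)^{i-1}(a(X_i)f)\,X_1\wedge\cdots\widehat{X_i}\cdots\wedge X_p$. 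Graded antisymmetry (iii) is then immediate from swapping the roles of $i$ and $j$ combined with the standard wedge-rearrangement signs, the biderivation identity (i) follows by splitting the inner sums over indices according to which factor group each $X_i$ or $Y_j$ belongs to, and (ii) is built into the definition.

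The main technical obstacle is to verify that this prescription is \emph{well-defined}—that is, that replacing a factor $X_i$ by $fX_i$ for $f\in C^\infty(M)$ yields the same element whether $f$ is treated as a multiplier of the whole wedge product or absorbed into a single slot before applying the formula. This is precisely the compatibility enforced by the anchored Leibniz rule $[X,fY]_A=(a(X)f)Y+f[X,Y]_A$ assumed in the statement: the correction $(a(X_i)f)$ produced on one side of the identity is exactly the function-factor contribution that appears on the other. Once this $C^\infty(M)$-compatibility has been checked—which is the only genuinely nontrivial bookkeeping—the verification of (i), (ii), (iii) consists of routine sign chases, and at no point is the Jacobi identity for $[\cdot,\cdot]_A$ required.
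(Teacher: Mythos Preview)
Your proposal is correct and matches the paper's approach exactly: the paper does not give a detailed proof but simply states that ``by observing carefully the proof of the existence and uniqueness theorem of the Schouten bracket of the usual bracket for vector fields (for example, see \cite{MR}), we can show that a similar one is also constructed,'' which is precisely the adaptation you sketch. In fact you have supplied more detail than the paper itself, correctly identifying that the only substantive check is $C^\infty(M)$-compatibility via the anchored Leibniz rule, with the Jacobi identity nowhere required.
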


\begin{rem}
In general, the Schouten bracket of a bracket $[\cdot ,\cdot ]_A$ on $\Gamma (A)$ does not satisfy the graded Jacobi identity because $[\cdot ,\cdot ]_A$ does not satisfy the Jacobi identity.
\end{rem}

Since $(TM,N)$ and $(T^*M,\pi^\sharp )$ are anchored vector bundles over $M$ and brackets $[\cdot ,\cdot ]_\pi $ and $[\cdot ,\cdot ]_N$ satisfy (\ref{pi_Leibniz}) and (\ref{N_Leibniz}) respectively, by Theorem \ref{Schouten}, $[\cdot ,\cdot ]_\pi $ and $[\cdot ,\cdot ]_N$ are extended to the Schouten bracket on $\Omega ^*(M)$ and on $\mathfrak{X}^*(M)$ respectively.

We define the concept related to a $2$-vector field and a $(1,1)$-tensor, called {\it the compatibility} of those.

\begin{defn}[\cite{MM2}, \cite{KMa}]\label{compatible}
The $2$-vector field $\pi $ on $M$ and the $(1,1)$-tensor $N$ on $M$ are {\it compatible} if those satisfy
\begin{eqnarray}
N\circ \pi ^\sharp =\pi ^\sharp \circ N^*, \label{kakan}
\end{eqnarray}
and the $(2,1)$-tensor
\begin{eqnarray}
C_\pi ^N(\alpha , \beta ):=[\alpha ,\beta ]_{N\pi ^\sharp }-[\alpha ,\beta ]_\pi ^{N^*}
\end{eqnarray}
vanishes, where for any $\alpha $ and $\beta $ in $\Omega ^1(M)$,
\begin{eqnarray}
&[\alpha ,\beta ]_{N\pi ^\sharp }:=\mathcal{L}_{N\pi ^\sharp \alpha }\beta -\mathcal{L}_{N\pi ^\sharp \beta }\alpha -d<N\pi ^\sharp \alpha ,\beta >\ \ \mbox{and}\\
&[\alpha ,\beta ]_\pi ^{N^*}:=[N^*\alpha , \beta ]_\pi +[\alpha ,N^*\beta ]_\pi -N^*[\alpha ,\beta ]_\pi .
\end{eqnarray}
A compatible pair $(\pi ,N)$ is a {\it Poisson Nijenhuis structure} if $\pi $ is Poisson and $N$ is Nijenhuis.
\end{defn}

Let $(\pi ,N)$ be a compatible pair and set $\pi _N(\alpha ,\beta ):=<N\pi ^\sharp \alpha ,\beta >$. Then it follows from (\ref{kakan}) that $\pi _N$ is a $2$-vector field on $M$. Hence under the assumption (\ref{kakan}), the bracket $[\cdot ,\cdot ]_{N\pi ^\sharp }$ can be rewritten as $[\cdot ,\cdot ]_{\pi _N}$. If $(\pi ,N)$ is a Poisson Nijenhuis structure on $M$, then $\pi _N$ is Poisson . 

For any compatible pair $(\pi ,N)$, we set $\pi _0:=\pi $ and define a $2$-vector field $\pi _{k+1}$ by the condition $\pi _{k+1}^\sharp  =N\circ \pi _k^\sharp $ inductively. In the case of a compatible pair $(\pi ,N)$ of which $N$ is Nijenhuis, the following proposition corresponding to the existence theorem of the hierarchy of Poisson Nijenhuis structures \cite{MM2}, \cite{KMa} can be shown in the same way as the theorem.

\begin{prop}\label{hierarchy}
Let $(\pi ,N)$ be a compatible pair on $M$ such that $N$ is Nijenhuis. Then all pairs $(\pi _k,N^p)$ are compatible pairs on $M$ such that $N^p$ are Nijenhuis. Furthermore for any $k,l\geq 0$ and $Q$ in $\mathfrak{X}^*(M)$, $[\pi _k,Q]_{N^{l+1}}=[\pi _{k+1},Q]_{N^l}$.
\end{prop}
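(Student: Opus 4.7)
The plan is to adapt the proof of the Poisson Nijenhuis hierarchy theorem from \cite{MM2,KMa}, noting that the condition $[\pi,\pi]=0$ plays no role in the steps we need: only the compatibility $C_\pi^N=0$ and the vanishing of $\mathcal{T}_N$ are used. First I will settle the structural prerequisites. Iterating $N\circ\pi^\sharp=\pi^\sharp\circ N^*$ gives $\pi_k^\sharp=N^k\pi^\sharp=\pi^\sharp(N^*)^k$, from which skew-symmetry of $\pi_k^\sharp$ follows, so $\pi_k$ is a well-defined $2$-vector field; the same iteration yields the algebraic half $N^p\pi_k^\sharp=\pi_k^\sharp(N^p)^*$ of compatibility of $(\pi_k,N^p)$. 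That every power of a Nijenhuis tensor is again Nijenhuis is classical: I would prove it by induction on $p$ from the formula expressing $\mathcal{T}_{N^{p+1}}$ as an algebraic combination of $\mathcal{T}_{N^p}$ and $\mathcal{T}_N$.

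Next I would prove the shift identity $[\pi_k,Q]_{N^{l+1}}=[\pi_{k+1},Q]_{N^l}$. By Theorem \ref{Schouten}, both sides are derivations of degree $1$ in $Q$, so it suffices to verify the identity for $Q=f\in C^\infty(M)$ and $Q=X\in\mathfrak{X}(M)$. The case $Q=f$ is straightforward: the biderivation property together with the anchor formula $[Y,f]_A=a(Y)f$ reduces each side to a local expression which, on comparison in coordinates, coincides via the already-established algebraic compatibility $N^p\pi_k^\sharp=\pi_k^\sharp(N^p)^*$. The case $Q=X$ is where the work is: after the biderivation property is applied, the identity becomes a Schouten-bracket repackaging of the tensorial identity $C_\pi^N=0$ combined with the Nijenhuis condition $\mathcal{T}_N=0$, and is carried out by the same computation as in \cite{MM2,KMa}.

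Finally, the tensorial part $C_{\pi_k}^{N^p}=0$ of compatibility of $(\pi_k,N^p)$ follows once the shift identity is available. Unfolding the definition,
\[
C_{\pi_k}^{N^p}(\alpha,\beta)=[\alpha,\beta]_{\pi_{k+p}}-[(N^p)^*\alpha,\beta]_{\pi_k}-[\alpha,(N^p)^*\beta]_{\pi_k}+(N^p)^*[\alpha,\beta]_{\pi_k},
\]
and an induction on $(k,p)$, starting from the base case $C_\pi^N=0$ and inserting the shift identity at each step to pass from $(k,p)$ to $(k+1,p-1)$, reduces the required vanishing to the base case.

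The main obstacle I expect is the generator case $Q=X$ of the shift identity, which requires rearranging iterated commutators interleaving $N$-applications and Lie brackets. This is exactly the point at which the full strength of $C_\pi^N=0$ and $\mathcal{T}_N=0$ is needed; since $[\pi,\pi]=0$ never enters the argument of \cite{MM2,KMa}, it transfers verbatim to the present weaker compatible setting.
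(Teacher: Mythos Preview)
Your proposal is correct and matches the paper's approach: the paper gives no detailed proof of this proposition, stating only that it ``can be shown in the same way as'' the classical hierarchy theorem for Poisson Nijenhuis structures in \cite{MM2,KMa}, which is precisely what you outline. Your sketch is in fact more detailed than what the paper provides; just be careful that the induction for $C_{\pi_k}^{N^p}=0$ needs two stages (first on $k$ to obtain $C_{\pi_k}^{N}=0$, then on $p$), since the single step $(k,p)\to(k+1,p-1)$ you describe only moves along lines of constant $k+p$.
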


The compatibility of a $2$-vector field $\pi $ and a $(1,1)$-tensor $N$ is equivalent to the following equations using the Schouten brackets of $[\cdot ,\cdot ]_\pi $ and $[\cdot ,\cdot]_N$. 

\begin{theorem}\label{compatible_equivalent}
Let $M$ be a $C^\infty $-manifold, $\pi $ a $2$-vector field on $M$ and $N$ a $(1,1)$-tensor on $M$. Then the following properties are equivalent:
\begin{enumerate}
\item[(i)]$\pi $ and $N$ are compatible; 
\item[(ii)]the operator $d_N$ is a derivation of the Schouten bracket $[\cdot ,\cdot ]_\pi:$
\begin{eqnarray}\label{N_derivation}
d_N[\xi _1 , \xi_2 ]_\pi =[d_N\xi _1, \xi _2 ]_\pi +(-1)^{\mathrm{deg}\xi _1+1}[\xi _1, d_N\xi_2 ]_\pi;
\end{eqnarray}
\item[(iii)]the operator $d_\pi$ is a derivation of the Schouten bracket $[\cdot ,\cdot ]_N:$
\begin{eqnarray}\label{pi_derivation}
d_\pi[D_1,D_2]_N=[d_\pi D_1, D_2]_N+(-1)^{\mathrm{deg}D_1+1}[D_1, d_\pi D_2]_N,
\end{eqnarray}
\end{enumerate}
where $\xi _i$'s are in $\Omega^*(M)$ and $D_i$'s are in $\mathfrak{X}^*(M)$.
\end{theorem}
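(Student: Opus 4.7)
The plan is to prove the two equivalences $\text{(i)}\Leftrightarrow\text{(ii)}$ and $\text{(i)}\Leftrightarrow\text{(iii)}$, and I would establish them by the same argument modulo the formal interchange of $(\pi,N)$ with $(N,\pi)$ and of $\Omega^*(M)$ with $\mathfrak{X}^*(M)$. Concentrating first on $\text{(i)}\Leftrightarrow\text{(ii)}$, the key preliminary observation is that $d_N$ is a degree-$(+1)$ derivation of $(\Omega^*(M),\wedge)$, while $[\cdot,\cdot]_\pi$ is a graded biderivation of $\wedge$ by Theorem \ref{Schouten}(i). Standard bookkeeping then shows that the expression $\Delta(\xi_1,\xi_2):=d_N[\xi_1,\xi_2]_\pi-[d_N\xi_1,\xi_2]_\pi-(-1)^{\mathrm{deg}\xi_1+1}[\xi_1,d_N\xi_2]_\pi$ satisfies a Leibniz-type rule in each argument separately, so that the identity (\ref{N_derivation}) is equivalent to $\Delta$ vanishing on pairs of generators, i.e., with $\xi_1,\xi_2$ each ranging over $C^\infty(M)\cup\Omega^1(M)$. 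Crucially, this reduction uses only the (bi)derivation structure, not a graded Jacobi identity, which is essential here since neither $[\cdot,\cdot]_\pi$ nor $[\cdot,\cdot]_N$ is a graded Lie bracket in general.

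The second step is to verify $\Delta=0$ on the three generator pairs. On $(f,g)\in C^\infty(M)\times C^\infty(M)$, the left-hand side vanishes identically, and unwinding the Schouten brackets in degrees $0$ and $1$ shows that the right-hand side vanishes for all $f,g$ if and only if $N\circ\pi^\sharp=\pi^\sharp\circ N^*$, recovering (\ref{kakan}). For the mixed pair $(f,\alpha)$ with $\alpha\in\Omega^1(M)$, a direct Cartan-type computation using $d_Nf=N^*df$ together with the Leibniz rule (\ref{pi_Leibniz}) shows that (\ref{N_derivation}) is automatic once (\ref{kakan}) is in force. Finally, for $(\alpha,\beta)\in\Omega^1(M)\times\Omega^1(M)$, expanding $d_N[\alpha,\beta]_\pi$ and $[d_N\alpha,\beta]_\pi+[\alpha,d_N\beta]_\pi$ via the defining formula (\ref{gaibibun}) for $d_N$ and the Lie-derivative expression for $[\cdot,\cdot]_\pi$, and regrouping carefully with the aid of (\ref{kakan}), collapses (\ref{N_derivation}) to exactly $C_\pi^N(\alpha,\beta)=0$. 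Together with (\ref{kakan}), this recovers the compatibility of $(\pi,N)$ in the sense of Definition \ref{compatible}.

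The equivalence $\text{(i)}\Leftrightarrow\text{(iii)}$ is obtained by running the same argument with the roles exchanged: $d_\pi$ is a degree-$(+1)$ derivation of $(\mathfrak{X}^*(M),\wedge)$ with generators $C^\infty(M)$ and $\mathfrak{X}(M)$, $[\cdot,\cdot]_N$ is a graded biderivation by Theorem \ref{Schouten}(i), and the three generator-level identities produce the same two compatibility conditions (\ref{kakan}) and $C_\pi^N=0$. The principal technical obstacle is the $(\alpha,\beta)$ calculation in the second step: regrouping the expansion into precisely $C_\pi^N(\alpha,\beta)$ requires careful cancellation of many $\mathcal{L}_{\pi^\sharp\alpha}$-, $\mathcal{L}_{\pi^\sharp\beta}$-, and $d$-terms. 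A related subtlety is that, without the Nijenhuis hypothesis on $N$, one does not have $d_N^2=0$, so the Cartan-calculus identities for $d_N$ and $\mathcal{L}^N$ that one would use freely in the Poisson Nijenhuis setting must here be reverified directly from (\ref{gaibibun}) and the Leibniz rule (\ref{N_Leibniz}).
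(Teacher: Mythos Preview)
Your overall strategy---reduce the derivation identity to generator pairs via the biderivation property of the generalized Schouten bracket, then analyze each case---is correct and is exactly what the paper does. For (i)$\Leftrightarrow$(ii) your outline matches Proposition~3.2 in \cite{K}, which the paper simply invokes.

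For (i)$\Leftrightarrow$(iii), however, your claim that one can ``run the same argument with the roles exchanged'' glosses over a real asymmetry, and this is where the gap lies. The paper's explicit computation of the defect $A_{\pi,N}$ on generators shows that the mixed case $A_{\pi,N}(X,f)$ is \emph{not} automatic from (\ref{kakan}) alone: paired against $dg$ it already contains the term $\langle X,\,C_\pi^N(df,dg)\rangle$. More importantly, the top case $A_{\pi,N}(X,Y)(df,dg)$ does \emph{not} collapse directly to $C_\pi^N$; it produces sums of expressions of the form
\[
\langle df,\ \mathcal{L}^\pi_{d_N h}\,X + [d_\pi h,\,X]_N\rangle
\]
with $h=\langle X,dg\rangle,\langle Y,df\rangle$, etc. Relating the vanishing of these to $C_\pi^N=0$ is precisely the content of Lemma~\ref{lemma_compatible}, which the paper proves separately and then invokes to close the argument. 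Your proposal omits this lemma, and without it the passage ``$(X,Y)$-case $\Leftrightarrow C_\pi^N=0$'' is unjustified. The formal duality you appeal to breaks because $[\cdot,\cdot]_\pi$ and $[\cdot,\cdot]_N$ are built by genuinely different recipes (Koszul-type via $\pi^\sharp$ versus Nijenhuis deformation of the Lie bracket), so the $(\alpha,\beta)$ computation on the $\Omega^*$ side does not transcribe mechanically to the $(X,Y)$ computation on the $\mathfrak{X}^*$ side.
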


In the case of that $\pi $ is Poisson, Theorem \ref{compatible_equivalent} coincides with Lemma 3.6 in \cite{SX} and Proposition 3.2 in \cite{K}. However to prove Proposition 3.2 in \cite{K}, properties for a Lie bialgebroid \cite{LWX} were used since $((TM)_N,(T^*M)_\pi )$ is a Lie bialgebroid, and Lemma 3.6 in \cite{SX} does not mention the equivalence of (i) and (iii) in Theorem \ref{compatible_equivalent}. Therefore Theorem \ref{compatible_equivalent} is worthy in the sense that these equivalence is indicated by eliminating conditions that do not require it. To prove Theorem \ref{compatible_equivalent}, we need the following lemma.

\begin{lemma}\label{lemma_compatible}
Let $\pi $ be a $2$-vector field on $M$ and $N$ a $(1,1)$-tensor on $M$. Assume that $\pi $ and $N$ satisfy the condition (\ref{kakan}). Then the pair $(\pi ,N)$ is compatible if and only if for any $f$ in $C^\infty (M)$ and $X$ in $\mathfrak{X}(M)$,
\begin{eqnarray}\label{lemma_condition}
\mathcal{L}_{d_Nf}^\pi X=-[d_\pi f,X]_N.
\end{eqnarray}
\end{lemma}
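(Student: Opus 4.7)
The plan is to establish, under the hypothesis (\ref{kakan}), the pointwise identity
\begin{equation*}
<\mathcal{L}_{d_Nf}^\pi X + [d_\pi f, X]_N,\beta> \;=\; <C_\pi^N(df,\beta), X>
\end{equation*}
for every $f\in C^\infty(M)$, $X\in\mathfrak{X}(M)$, and $\beta\in\Omega^1(M)$, from which the equivalence in the lemma follows at once. Compatibility gives $C_\pi^N\equiv 0$ and hence (\ref{lemma_condition}); conversely, if (\ref{lemma_condition}) holds then the right-hand side vanishes for every choice of arguments, and $C^\infty(M)$-linearity of $C_\pi^N$ in its first slot (valid once (\ref{kakan}) is in force) promotes this to $C_\pi^N\equiv 0$ on arbitrary pairs of 1-forms.

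To reach the identity, I first derive a closed form for the left-hand side. From $\mathcal{L}_\alpha^\pi=d_\pi\iota_\alpha+\iota_\alpha d_\pi$, $d_\pi X=[\pi,X]=-\mathcal{L}_X\pi$, and $[\mathcal{L}_X,\iota_\alpha]=\iota_{\mathcal{L}_X\alpha}$, a short computation yields $\mathcal{L}_\alpha^\pi X = \pi^\sharp(\iota_X d\alpha) + [\pi^\sharp\alpha, X]$ for any $\alpha\in\Omega^1(M)$. Specializing to $\alpha=d_Nf$, using $d(d_Nf)=-d_Ndf$ (which follows from the direct check $d\circ d_N+d_N\circ d=0$ on $C^\infty(M)$), the identity $\pi^\sharp d_Nf=N\pi^\sharp df=:NX_f$ from (\ref{kakan}) together with $d_\pi f=-X_f$, and expanding $[d_\pi f,X]_N=-[X_f,X]_N$ from the definition of $[\cdot,\cdot]_N$, the two summands combine to
\begin{equation*}
\mathcal{L}_{d_Nf}^\pi X + [d_\pi f, X]_N \;=\; -\pi^\sharp(\iota_X d_Ndf) - (\mathcal{L}_{X_f}N)(X).
\end{equation*}
A parallel simplification of the right-hand side relies on the elementary identity $[dh,\gamma]_\pi=\mathcal{L}_{\pi^\sharp dh}\gamma$, valid for every 1-form $\gamma$ because the $d<\pi^\sharp dh,\gamma>$ term cancels the $\mathcal{L}_{\pi^\sharp\gamma}(dh)$ term in the definition of $[\cdot,\cdot]_\pi$. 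Combined with (\ref{kakan}) and the definition of $[\cdot,\cdot]_\pi^{N^*}$, this yields
\begin{equation*}
C_\pi^N(df,\beta) \;=\; \mathcal{L}_{NX_f}\beta - [d_Nf,\beta]_\pi - (\mathcal{L}_{X_f}N^*)\beta.
\end{equation*}

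It remains to pair the first displayed expression with $\beta$ and expand $<C_\pi^N(df,\beta),X>$ from the second, comparing term by term. After the cancellations dictated by the derivation property of $\mathcal{L}$ and the definition of $[\cdot,\cdot]_N$, the only residue is $X\bigl(<\pi^\sharp N^*\beta,df>+<N\pi^\sharp df,\beta>\bigr)$; this equals $X(\pi_N(\beta,df)+\pi_N(df,\beta))$ and vanishes by the antisymmetry of $\pi_N$, itself a direct consequence of (\ref{kakan}). The main obstacle is really just the careful bookkeeping of roughly a half-dozen terms on each side in this final expansion; once the two closed-form reductions above are in hand, the cancellation is forced.
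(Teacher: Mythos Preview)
Your proof is correct and follows essentially the same strategy as the paper: both arguments establish the identity $\langle \mathcal{L}_{d_Nf}^\pi X + [d_\pi f, X]_N, \beta\rangle = \langle X, C_\pi^N(df,\beta)\rangle$ and then invoke tensoriality of $C_\pi^N$ together with the fact that exact 1-forms generate $\Omega^1(M)$ locally. The only difference is organizational: the paper expands each side directly term by term, whereas you first pass through the compact intermediate formulas $\mathcal{L}_{d_Nf}^\pi X + [d_\pi f, X]_N = -\pi^\sharp(\iota_X d_N df) - (\mathcal{L}_{X_f}N)(X)$ and $C_\pi^N(df,\beta) = \mathcal{L}_{NX_f}\beta - [d_Nf,\beta]_\pi - (\mathcal{L}_{X_f}N^*)\beta$ before comparing; this makes the bookkeeping somewhat cleaner but is not a genuinely different argument.
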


\begin{proof}
For any $\xi $ in $\Omega ^1(M)$, we calculate
\begin{eqnarray*}\label{lemma_1}
<\mathcal{L}_{d_Nf}^\pi X, \xi >&=&\mathcal{L}_{d_Nf}^\pi<X,\xi >-<X,\mathcal{L}_{d_Nf}^\pi \xi>\\
                                &=&(\pi ^\sharp N^*df)<X,\xi >-<X,[N^*df, \xi]_\pi>\\
                                &=&(\pi _N^\sharp df)<X,\xi >-<X,[df, \xi]_\pi^{N^*}>\\
                                &\quad & \quad +<X, [df, N^*\xi ]_\pi >-<X,N^*[df,\xi]_\pi >\\
                                &=&(\pi _N^\sharp df)<X,\xi >-<X,[df, \xi]_\pi^{N^*}>\\
                                &\quad & \quad +<X, \mathcal{L}_{\pi ^\sharp df}(N^*\xi)>-<NX,\mathcal{L}_{\pi ^\sharp df}\xi >\\
                                &=&(\pi _N^\sharp df)<X,\xi >-<X,[df, \xi]_\pi^{N^*}>\\
                                &\quad & \quad +<N[d_\pi f,X], \xi>-<[d_\pi f,NX],\xi >.
\end{eqnarray*}
On the other hand, we obtain
\begin{eqnarray*}\label{lemma_2}
<[d_\pi f,X]_N,\xi >&=&<[Nd_\pi f,X]+[d_\pi f,NX]-N[d_\pi f,X],\xi >\\
                    &=&<[d_{\pi _N}f,X],\xi >+<[d_\pi f,NX]-N[d_\pi f,X],\xi >\\
                    &=&<[[\pi _N,f],X],\xi >+<[d_\pi f,NX]-N[d_\pi f,X],\xi >\\
                    &=&<-[[X,\pi _N],f]-[\pi _N, [f,X]],\xi >\\
                    &\quad &\quad \quad \quad \quad \quad +<[d_\pi f,NX]-N[d_\pi f,X],\xi >\\
                    &=&-<[d_{\pi _N}X,f],\xi >+<[\pi _N, Xf],\xi >\\
                    &\quad &\quad \quad \quad \quad \quad +<[d_\pi f,NX]-N[d_\pi f,X],\xi >\\
                    &=&-(d_{\pi _N}X)(df,\xi )+\pi _N(d(Xf),\xi )\\
                    &\quad &\quad \quad \quad \quad \quad +<[d_\pi f,NX]-N[d_\pi f,X],\xi >\\
                    &=&-(\pi _N^\sharp df)<X,\xi >+(\pi _N^\sharp \xi)<X,df>\\
                    &\quad &\quad \quad \quad +<X,[df,\xi ]_{\pi _N}>-(\pi _N^\sharp \xi )(Xf)\\
                    &\quad &\quad \quad \quad \quad +<[d_\pi f,NX]-N[d_\pi f,X],\xi >\\
                    &=&-(\pi _N^\sharp df)<X,\xi >+<X,[df,\xi ]_{\pi _N}>\\
                    &\quad &\quad \quad \quad \quad +<[d_\pi f,NX],\xi>-<N[d_\pi f,X],\xi >.
\end{eqnarray*}
Therefore we find
\begin{eqnarray*}\label{lemma_3}
<\mathcal{L}_{d_Nf}^\pi X+[d_\pi f,X]_N,\xi >&=&<X,[df,\xi ]_{\pi _N}-[df,\xi ]_\pi ^{N^*}>\\
                                             &=&<X,C_\pi ^N(df,\xi )>.
\end{eqnarray*}
Because the exact $1$-forms generate locally the $1$-forms as a $C^\infty (M)$-module and $C_\pi ^N$ is tensorial, we obtain the equivalence to prove.
\end{proof}

\begin{proof}[Proof of Theorem \ref{compatible_equivalent}]
The equivalence of (i) and (ii) can be proved similarly as Proposition 3.2 in \cite{K}. 
We shall prove the equivalence of (i) and (iii). We set for any $D_1$ and $D_2$ in $\mathfrak{X} ^*(M)$,
\begin{eqnarray}\label{A_pi}
A_{\pi ,N}(D_1,D_2):=d_\pi [D _1 , D_2 ]_N -[d_\pi D_1, D_2 ]_N-(-1)^{\mathrm{deg}D_1+1}[D_1, d_\pi D_2 ]_N. 
\end{eqnarray}
Then by straightforward calculation, for any $f,g$ in $C^\infty (M)$, $X,Y$ in $\mathfrak{X}^1(M)$, $D_1,D_2$ and $D_3$ in $\mathfrak{X}^*(M)$, we obtain
\begin{eqnarray*}
         A_{\pi ,N}(f,g)&=&<(N\pi ^\sharp -\pi ^\sharp N^*)df,dg>,\\
    <A_{\pi ,N}(X,f),dg>&=&((N\pi ^\sharp -\pi ^\sharp N^*)df)<X,dg>\\
                        &\quad &\quad \quad -<X,d<(N\pi ^\sharp -\pi ^\sharp N^*)df,dg>>\\
                        &\quad &\quad \quad \quad \quad \quad \quad \quad \quad \quad \quad +<X,C_N^\pi(df,dg)>,\\
(A_{\pi ,N}(X,Y))(df,dg)&+&(d(C_N^\pi(df,dg)))(X,Y)\\
                        &=&+<df, \mathcal{L}_{d_N<Y,dg>}^\pi X+[d_\pi <Y,dg>,X]_N>\\
                        &\quad &-<df, \mathcal{L}_{d_N<X,dg>}^\pi Y+[d_\pi <X,dg>,Y]_N>\\
                        &\quad &+<dg, \mathcal{L}_{d_N<X,df>}^\pi Y+[d_\pi <X,df>,Y]_N>\\
                        &\quad &-<dg, \mathcal{L}_{d_N<Y,df>}^\pi X+[d_\pi <Y,df>,X]_N>\\
                        &\quad &+<(\pi ^\sharp N^*-N\pi ^\sharp )d<X,dg>,d<Y,df>>\\
                        &\quad &-<(\pi ^\sharp N^*-N\pi ^\sharp )d<X,df>,d<Y,dg>>,\\
A_{\pi ,N}(D_1,D_2\wedge D_3 )&=&A_{\pi ,N}(D_1,D_2) \wedge D_3\\
                              &\quad &\quad \quad +(-1)^{\mbox{deg}D_1}D_2\wedge A_{\pi ,N}(D_1,D_3),\\
A_{\pi ,N}(D_1,D_2)&=&(-1)^{(\mbox{deg}D_1-1)(\mbox{deg}D_2-1)}A_{N,\pi }(D_2,D_1),                        
\end{eqnarray*}
so that the conclusion follows from these equations and Lemma \ref{lemma_compatible}. 
\end{proof}

\section{Pseudo-Poisson Nijenhuis manifolds}\label{Pseudo-Poisson Nijenhuis manifolds}

In this section, we define Pseudo-Poisson Nijenhuis manifolds and investigate properties of them. 

\begin{defn}\label{pPN}
Let $M$ be a $C^\infty $-manifold, $\pi $ a $2$-vector field on $M$, a $(1,1)$-tensor $N$ on $M$ a Nijenhuis structure compatible with $\pi $, and $\Phi $ a $3$-vector field on $M$. Then a triple $(\pi, N, \Phi )$ is a {\it pseudo-Poisson Nijenhuis structure} on $M$ if the following conditions hold:
\begin{eqnarray}
&\mathrm{(i)}& [\pi ,\Phi ]=0,\label{(i)}\\
&\mathrm{(ii)}& \frac{1}{2}\iota _{\alpha \wedge \beta }\left[\pi ,\pi \right]=N\iota _{\alpha \wedge \beta }\Phi,\label{(ii)}\\
&\mathrm{(iii)}& N\iota _{\alpha \wedge \beta }\mathcal{L}_X\Phi -\iota _{\alpha \wedge \beta }\mathcal{L}_{NX}\Phi -\iota _{(\mathcal{L}_XN^*)(\alpha \wedge \beta )}\Phi =0,\label{(iii)}
\end{eqnarray}
for any $X$ in $\mathfrak{X}(M)$, $\alpha$ and $\beta $ in $\Omega ^1(M)$, where $\iota _{\alpha \wedge \beta }:=\iota _\beta \iota _\alpha $ and $\iota _{(\mathcal{L}_XN^*)(\alpha \wedge \beta )}:=\iota _{(\mathcal{L}_XN^*)\alpha \wedge \beta }+\iota _{\alpha \wedge (\mathcal{L}_XN^*)\beta }$. The quadruple $(M, \pi, N, \Phi )$ is called a {\it pseudo-Poisson Nijenhuis manifold}.
\end{defn}

\begin{rem}
The reason why we use not ``quasi-'' but ``pseudo-'' is to avoid confusion with another notion quasi-Poisson manifold in \cite{AK}, \cite{AKMe}.
\end{rem}

Now we describe the main theorem in this section. This is one of the fundamental properties of pseudo-Poisson Nijenhuis manifolds. A similar result for Poisson quasi-Nijenhuis manifolds is also known \cite{SX}.

\begin{theorem}\label{pPN-to-qLbia}
Let $M$ be a $C^\infty $-manifold, $\pi $ a $2$-vector field on $M$, $N$ a Nijenhuis structure on $M$ compatible with $\pi $ and $\Phi $ a $3$-vector field on $M$. Then a quadruple $(M, \pi, N, \Phi )$ is a pseudo-Poisson Nijenhuis manifold if and only if $((TM)_N, d_\pi , \Phi)$ is a quasi-Lie bialgebroid.
\end{theorem}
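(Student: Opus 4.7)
The strategy is to match the three axioms defining a quasi-Lie bialgebroid $((TM)_N, d_\pi, \Phi)$ against the three conditions of Definition \ref{pPN}. Since $N$ is Nijenhuis, $(TM)_N$ is a Lie algebroid (Remark \ref{Lie algebroid}), and Theorem \ref{Schouten} endows $\mathfrak{X}^*(M)$ with the Gerstenhaber bracket $[\cdot,\cdot]_N$. By Theorem \ref{compatible_equivalent}, the compatibility of $(\pi, N)$---part of our hypotheses---is equivalent to $d_\pi$ being a degree-one derivation of $[\cdot,\cdot]_N$, which is precisely one of the three quasi-Lie bialgebroid axioms. Thus the theorem reduces to showing that (i) is equivalent to $d_\pi\Phi = 0$, and that (ii) and (iii) together are equivalent to $d_\pi^2 = [\Phi,\cdot]_N$. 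The first equivalence is immediate from the identity $d_\pi D = [\pi, D]$ recorded just after \eqref{gaibibun}.

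For the second equivalence, the key observation is that both $d_\pi^2$ and $[\Phi,\cdot]_N$ are degree-two derivations of $(\mathfrak{X}^*(M), \wedge)$ obeying the unsigned Leibniz rule $D(X \wedge Y) = (DX) \wedge Y + X \wedge (DY)$: for $d_\pi^2$ this is because $d_\pi$ is a degree-one graded derivation, so the cross-terms in $d_\pi \circ d_\pi$ cancel via $(-1)^{|X|} + (-1)^{|X|+1} = 0$; for $[\Phi,\cdot]_N$ it follows from the biderivation property in Theorem \ref{Schouten}(i) with parity sign $(-1)^{(|\Phi|+1)|X|} = 1$. Consequently $d_\pi^2 - [\Phi,\cdot]_N$ is a derivation of $\wedge$, and vanishes on all of $\mathfrak{X}^*(M)$ if and only if it vanishes on the generators $C^\infty(M)$ and $\mathfrak{X}(M)$.

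On $C^\infty(M)$, graded Jacobi for the standard Schouten bracket yields $d_\pi^2 f = \tfrac12 [[\pi,\pi], f]$, while a direct Schouten expansion gives $[\Phi, f]_N$ as an explicit contraction of $\Phi$ with $N^*df$. Pairing both bivector fields with an arbitrary $(\alpha, \beta) \in \Omega^1(M)^{\times 2}$ and invoking the total antisymmetry of $[\pi,\pi]$, the equation $d_\pi^2 f = [\Phi, f]_N$ collapses to $\tfrac12 [\pi,\pi](\alpha, \beta, df) = \Phi(\alpha, \beta, N^*df)$, which is condition (ii) contracted against $\gamma = df$. Since both sides are $C^\infty(M)$-linear in the third argument and exact $1$-forms locally span $\Omega^1(M)$, this is equivalent to (ii) for all $\gamma$.

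On $\mathfrak{X}(M)$, one starts with $d_\pi^2 X = -\tfrac12 \mathcal{L}_X[\pi,\pi]$ and uses (ii) to rewrite $\tfrac12[\pi,\pi]$ as an $N$-twist of $\Phi$. Expanding the right-hand side $[\Phi, X]_N$ via the biderivation of $[\cdot,\cdot]_N$ together with the defining identity $[Y, X]_N = [NY, X] + [Y, NX] - N[Y, X]$, and contracting with $(\alpha, \beta)$, the equation reduces term-by-term to (iii): the three summands $N\iota_{\alpha\wedge\beta}\mathcal{L}_X\Phi$, $\iota_{\alpha\wedge\beta}\mathcal{L}_{NX}\Phi$, and $\iota_{(\mathcal{L}_X N^*)(\alpha\wedge\beta)}\Phi$ arise respectively from $\mathcal{L}_X$ acting through the $N$-twist supplied by (ii), from the $NX$-insertion in the $N$-bracket expansion, and from the non-tensorial deficit $\mathcal{L}_X \circ N - N \circ \mathcal{L}_X$. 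This last verification---matching the three terms after careful sign bookkeeping---is the principal obstacle, but it is essentially a routine calculation given the framework of Section \ref{Compatible pairs}.
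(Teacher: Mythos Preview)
Your proposal is correct and follows essentially the same approach as the paper: reduce the quasi-Lie bialgebroid axioms to the three conditions of Definition \ref{pPN} via Theorem \ref{compatible_equivalent} for the derivation property, the identity $d_\pi D=[\pi,D]$ for $d_\pi\Phi=0$, and the observation that $d_\pi^2-[\Phi,\cdot]_N$ is a derivation of $\wedge$ so that it suffices to check $d_\pi^2=[\Phi,\cdot]_N$ on $C^\infty(M)$ and $\mathfrak{X}(M)$. The only minor difference is that for the vector field step the paper works with the deformed Lie derivative $\mathcal{L}_X^N$ and the identity $\mathcal{L}_X^N\alpha=\mathcal{L}_{NX}\alpha-(\mathcal{L}_XN^*)\alpha$ rather than expanding $[\Phi,X]_N$ through the biderivation property, but this amounts to the same computation.
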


\proof
Since a $(1,1)$-tensor $N$ is Nijenhuis, the Lie algebroid $(TM)_N$ is well-defined. A triple $((TM)_N, d_\pi , \Phi)$ is a quasi-Lie bialgebroid if and only if the following three conditions hold: i) $d_\pi $ is a degree-one derivation of the Gerstenhaber algebra $(\mathfrak{X}^*(M), \wedge , [\cdot ,\cdot ]_N)$, ii) $d_\pi ^2=[\Phi ,\cdot ]_N$ and iii) $d_\pi \Phi =0$ by the definition.

i) means that (\ref{pi_derivation}) 
holds. This condition is equivalent to the compatibility of $\pi $ and $N$ by Theorem \ref{compatible_equivalent}.

Next, For any $f$ in $C^\infty (M)$, any $\alpha $ and $\beta $ in $\Omega ^1(M)$, we compute
\begin{eqnarray*}
(d_\pi ^2f)(\alpha , \beta )&=&[\pi , [\pi , f]](\alpha , \beta )=\frac{1}{2}[[\pi , \pi ], f](\alpha , \beta )\\
                            &=&\frac{1}{2}\iota _{df}[\pi ,\pi ](\alpha , \beta )=\frac{1}{2}[\pi ,\pi ](df, \alpha , \beta )\\
                            &=&\frac{1}{2}[\pi ,\pi ](\alpha , \beta , df)=\frac{1}{2}\iota _{\alpha \wedge \beta}[\pi ,\pi ](df),
\end{eqnarray*}
where the second equality follows from the graded Jacobi identity of the Schouten bracket $[\cdot ,\cdot ]$, and the fact is used that $[D,f]=(-1)^{k+1}\iota _{df}D$ for any $D$ in $\mathfrak{X}^k (M)$ in the third equality. On the other hand, we have
\begin{eqnarray*}
[\Phi ,f]_N(\alpha , \beta )&=&\iota _{N^*df}\Phi (\alpha , \beta )=\Phi (N^*df, \alpha , \beta )\\
                            &=&\Phi (\alpha , \beta , N^*df)=\iota _{\alpha \wedge \beta}\Phi (N^*df)\\
                            &=&(N\iota _{\alpha \wedge \beta}\Phi )(df),
\end{eqnarray*}
where we use the fact that $[D,f]_N=(-1)^{k+1}\iota _{N^*df}D$ for any $D$ in $\mathfrak{X}^k (M)$ in the first step. Therefore it follows that $d_\pi ^2=[\Phi , \cdot ]_N$ on $C^\infty (M)$ if and only if the equality (\ref{(ii)}) holds as a linear map on the exact $1$-forms. By $C^\infty (M)$-linearity of (\ref{(ii)}) and the fact that the exact $1$-forms generate locally the $1$-forms as a $C^\infty (M)$-module, the equality (\ref{(ii)}) holds on $\Omega ^1(M)$ if and only if $d_\pi ^2=[\Phi , \cdot ]_N$ holds on $C^\infty (M)$.

Next, under the assumption that the equality (\ref{(ii)}) holds on $\Omega ^1(M)$, for any $X$ in $\mathfrak{X}(M)$, any $\alpha , \beta $ and $\gamma $ in $\Omega ^1(M)$, we obtain
\begin{eqnarray*}
(d_\pi ^2X)(\alpha , \beta , \gamma)&=&[\pi , [\pi ,X]](\alpha , \beta , \gamma)=\frac{1}{2}[[\pi ,\pi ],X](\alpha , \beta , \gamma)\\
                                    &=&-\frac{1}{2}[X, [\pi ,\pi ]](\alpha , \beta , \gamma)=-\frac{1}{2}(\mathcal{L}_X[\pi ,\pi ])(\alpha , \beta , \gamma)\\
                                    &=&-\frac{1}{2}\{\mathcal{L}_X([\pi ,\pi ](\alpha , \beta , \gamma))-[\pi ,\pi ](\mathcal{L}_X\alpha , \beta , \gamma)\\
                                    &\quad &\quad \quad \quad -[\pi ,\pi ](\alpha , \mathcal{L}_X\beta , \gamma)-[\pi ,\pi ](\alpha , \beta , \mathcal{L}_X\gamma)\}\\
                                    &=&-\mathcal{L}_X\left(\frac{1}{2}\iota _{\alpha \wedge \beta }[\pi ,\pi ](\gamma)\right)+\frac{1}{2}\iota _{\mathcal{L}_X\alpha \wedge \beta }[\pi ,\pi ](\gamma)\\
                                    &\quad &\quad \quad \quad +\frac{1}{2}\iota _{\alpha \wedge \mathcal{L}_X\beta }[\pi ,\pi ](\gamma)+\frac{1}{2}\iota _{\alpha \wedge \beta }[\pi ,\pi ](\mathcal{L}_X\gamma)\\
                                    &=&-\mathcal{L}_X\left((N\iota _{\alpha \wedge \beta }\Phi )(\gamma)\right)+(N\iota _{\mathcal{L}_X\alpha \wedge \beta }\Phi )(\gamma)\\
                                    &\quad &\quad \quad \quad +(N\iota _{\alpha \wedge \mathcal{L}_X\beta }\Phi )(\gamma)+(N\iota _{\alpha \wedge \beta }\Phi )(\mathcal{L}_X\gamma)\\
                                    &=&-\mathcal{L}_X\left(\iota _{\alpha \wedge \beta }\Phi (N^*\gamma)\right)+\iota _{\mathcal{L}_X\alpha \wedge \beta }\Phi (N^*\gamma)\\
                                    &\quad &\quad \quad \quad +\iota _{\alpha \wedge \mathcal{L}_X\beta }\Phi (N^*\gamma)+\iota _{\alpha \wedge \beta }\Phi (N^*\mathcal{L}_X\gamma)\\
                                    &=&-\mathcal{L}_X\left(\Phi (\alpha , \beta ,N^*\gamma)\right)+\Phi (\mathcal{L}_X\alpha , \beta ,N^*\gamma)\\
                                    &\quad &\quad \quad \quad +\Phi (\alpha ,\mathcal{L}_X\beta ,N^*\gamma)+\Phi (\alpha ,\beta ,N^*\mathcal{L}_X\gamma)\\
                                    &=&-\mathcal{L}_X\left(\Phi (\alpha , \beta ,N^*\gamma)\right)+\Phi (\mathcal{L}_X\alpha , \beta ,N^*\gamma)\\
                                    &\quad &\quad +\Phi (\alpha ,\mathcal{L}_X\beta ,N^*\gamma)+\Phi (\alpha ,\beta ,\mathcal{L}_X(N^*\gamma)-(\mathcal{L}_XN^*)\gamma)\\
                                    &=&-\mathcal{L}_X\left(\Phi (\alpha , \beta ,N^*\gamma)\right)+\Phi (\mathcal{L}_X\alpha , \beta ,N^*\gamma)\\
                                    &\quad &\quad +\Phi (\alpha ,\mathcal{L}_X\beta ,N^*\gamma)+\Phi (\alpha ,\beta ,\mathcal{L}_X(N^*\gamma))\\
                                    &\quad &\quad -\Phi (\alpha ,\beta ,(\mathcal{L}_XN^*)\gamma)\\
                                    &=&-\left(\mathcal{L}_X\Phi \right)(\alpha , \beta ,N^*\gamma)-\Phi (\alpha ,\beta ,(\mathcal{L}_XN^*)\gamma),
\end{eqnarray*}
where the second equality follows from the graded Jacobi identity of $[\cdot ,\cdot ]$, and we use the equality (\ref{(ii)}) in the seventh equality .  On the other hand, we obtain
\begin{eqnarray*}
[\Phi ,X]_N(\alpha , \beta ,\gamma)&=&-[X,\Phi ]_N(\alpha , \beta ,\gamma)=-(\mathcal{L}_X^N\Phi )(\alpha , \beta ,\gamma)\\
                                   &=&-\mathcal{L}_X^N(\Phi (\alpha , \beta ,\gamma))+\Phi (\mathcal{L}_X^N\alpha , \beta ,\gamma)\\
                                   &\quad &\quad \quad +\Phi (\alpha , \mathcal{L}_X^N\beta ,\gamma)+\Phi (\alpha , \beta ,\mathcal{L}_X^N\gamma)\\
                                   &=&-\mathcal{L}_{NX}(\Phi (\alpha , \beta ,\gamma))\\
                                   &\quad &+\Phi (\mathcal{L}_{NX}\alpha -(\mathcal{L}_XN^*)\alpha , \beta ,\gamma)\\
                                   &\quad &+\Phi (\alpha , \mathcal{L}_{NX}\beta -(\mathcal{L}_XN^*)\beta ,\gamma)\\
                                   &\quad &+\Phi (\alpha , \beta ,\mathcal{L}_{NX}\gamma -(\mathcal{L}_XN^*)\gamma)\\
                                   &=&-\mathcal{L}_{NX}(\Phi (\alpha , \beta ,\gamma))+\Phi (\mathcal{L}_{NX}\alpha , \beta ,\gamma)\\
                                   &\quad &\quad \quad +\Phi (\alpha , \mathcal{L}_{NX}\beta ,\gamma)+\Phi (\alpha , \beta ,\mathcal{L}_{NX}\gamma)\\
                                   &\quad &\quad \quad -\Phi ((\mathcal{L}_XN^*)\alpha , \beta ,\gamma)-\Phi (\alpha , (\mathcal{L}_XN^*)\beta ,\gamma)\\
                                   &\quad &\quad \quad -\Phi (\alpha , \beta ,(\mathcal{L}_XN^*)\gamma)\\
                                   &=&-(\mathcal{L}_{NX}\Phi )(\alpha , \beta ,\gamma)-\Phi ((\mathcal{L}_XN^*)\alpha , \beta ,\gamma)\\
                                   &\quad &\quad \quad -\Phi (\alpha , (\mathcal{L}_XN^*)\beta ,\gamma)-\Phi (\alpha , \beta ,(\mathcal{L}_XN^*)\gamma),
\end{eqnarray*}
where we use the property that $\mathcal{L}_X^N\alpha =\mathcal{L}_{NX}\alpha -(\mathcal{L}_XN^*)\alpha $ for any $X$ in $\mathfrak{X}(M)$ and any $\alpha $ in $\Omega ^1(M)$. Therefore, we obtain
\begin{eqnarray*}
(d_\pi ^2-[\Phi ,X]_N)(\alpha , \beta ,\gamma)&=&-\left(\mathcal{L}_X\Phi \right)(\alpha , \beta ,N^*\gamma)-\Phi (\alpha ,\beta ,(\mathcal{L}_XN^*)\gamma)\\
                                              &\quad &\quad +(\mathcal{L}_{NX}\Phi )(\alpha , \beta ,\gamma)+\Phi ((\mathcal{L}_XN^*)\alpha , \beta ,\gamma)\\
                                              &\quad &\quad +\Phi (\alpha , (\mathcal{L}_XN^*)\beta ,\gamma)+\Phi (\alpha , \beta ,(\mathcal{L}_XN^*)\gamma)\\
                                              &=&-\left(\mathcal{L}_X\Phi \right)(\alpha , \beta ,N^*\gamma)+(\mathcal{L}_{NX}\Phi )(\alpha , \beta ,\gamma)\\
                                              &\quad &\quad +\Phi ((\mathcal{L}_XN^*)\alpha , \beta ,\gamma)+\Phi (\alpha , (\mathcal{L}_XN^*)\beta ,\gamma)\\
                                              &=&-(N\iota _{\alpha \wedge \beta }\mathcal{L}_X\Phi -\iota _{\alpha \wedge \beta }\mathcal{L}_{NX}\Phi \\
                                              &\quad &\quad \quad \quad \quad \quad \quad \quad \quad -\iota _{(\mathcal{L}_XN^*)(\alpha \wedge \beta )}\Phi )(\gamma ).
\end{eqnarray*}
Hence, under the assumption of (\ref{(ii)}), it follows that $d_\pi ^2=[\Phi , \cdot ]_N$ on $\mathfrak{X}(M)$ if and only if the equality (\ref{(iii)}) holds. 

Since $d_\pi ^2$ and $[\Phi , \cdot ]_N$ are derivatives on $(\Gamma (\Lambda ^*TM),\wedge )$, it follows that $d_\pi ^2=[\Phi ,\cdot ]_N$ on $C^\infty (M)\oplus \mathfrak{X}(M)$ if and only if  $d_\pi ^2=[\Phi ,\cdot ]_N$ on $\mathfrak{X}^* (M)$. Therefore ii) is equivalent to that (\ref{(ii)}) and (\ref{(iii)}) hold.

Finally, iii) is equivalent to (\ref{(i)}) due to that $d_\pi \Phi =[\pi ,\Phi ]$. Therefore the proof has been completed.\qedhere

\vspace{3mm}
By the theorem, we have the following result of Kosmann-Schwarzbach \cite{K}.

\begin{corollary}
Under the same assumption as Theorem \ref{pPN-to-qLbia}, the triple $(M,\pi ,N)$ is a Poisson Nijenhuis manifold if and only if $((TM)_N, d_\pi )$ is a Lie bialgebroid.
\end{corollary}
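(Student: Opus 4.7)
The plan is to obtain this corollary as the $\Phi = 0$ specialization of Theorem \ref{pPN-to-qLbia}. Since a Lie bialgebroid is by definition a quasi-Lie bialgebroid whose $3$-section vanishes, $((TM)_N, d_\pi)$ is a Lie bialgebroid if and only if $((TM)_N, d_\pi, 0)$ is a quasi-Lie bialgebroid. By Theorem \ref{pPN-to-qLbia}, the latter is equivalent to $(M, \pi, N, 0)$ being a pseudo-Poisson Nijenhuis manifold, so what remains is to verify that this is the same thing as $(M, \pi, N)$ being a Poisson Nijenhuis manifold.

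First I would substitute $\Phi = 0$ into Definition \ref{pPN}. Condition (\ref{(i)}) becomes $[\pi, 0] = 0$, which is automatic, and condition (\ref{(iii)}) reduces to $0 = 0$ since every one of its terms contains $\Phi$. Condition (\ref{(ii)}) collapses to
\begin{equation*}
\tfrac{1}{2}\iota_{\alpha \wedge \beta}[\pi, \pi] = 0 \qquad \text{for all } \alpha, \beta \in \Omega^1(M).
\end{equation*}
Because a $3$-vector field is determined pointwise by its values on triples of covectors, and $[\pi,\pi]$ is skew, the identity above is equivalent to $[\pi,\pi] = 0$, i.e.\ to $\pi$ being a Poisson structure. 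Combined with the standing hypotheses (already built into the statement of Theorem \ref{pPN-to-qLbia}) that $N$ is Nijenhuis and $(\pi, N)$ is compatible in the sense of Definition \ref{compatible}, this is exactly the defining data of a Poisson Nijenhuis structure on $M$.

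Composing these equivalences yields the corollary. There is no genuine obstacle here: the argument is a bookkeeping check that taking $\Phi = 0$ trivialises (\ref{(i)}) and (\ref{(iii)}) and converts (\ref{(ii)}) into the Poisson condition on $\pi$, so that the main theorem specializes directly to the Kosmann-Schwarzbach correspondence.
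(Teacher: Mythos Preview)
Your proposal is correct and matches the paper's approach: the paper simply records the corollary as an immediate consequence of Theorem \ref{pPN-to-qLbia} (``By the theorem, we have the following result\dots''), and your argument spells out exactly this specialization $\Phi = 0$, checking that conditions (\ref{(i)}) and (\ref{(iii)}) become vacuous while (\ref{(ii)}) reduces to $[\pi,\pi]=0$.
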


As in the case of Poisson quasi-Nijenhuis Lie algebroids \cite{CNN}, we can consider a straightforward generalization of pseudo-Poisson Nijenhuis manifolds.

\begin{defn}
A {\it pseudo-Poisson Nijenhuis Lie algebroid} $(A, \pi, N, \Phi )$ is a Lie algebroid $A$ equipped with a $2$-section $\pi $ in $\Gamma (\Lambda ^2A)$, a Nijenhuis structure $N:A\longrightarrow A$ compatible with $\pi $ in the sense of Definition \ref{compatible} and a $3$-section $\Phi $ in $\Gamma (\Lambda ^3A)$ satisfying the conditions (\ref{(i)}), (\ref{(ii)}) and (\ref{(iii)}) replaced $[\cdot ,\cdot ]$ and $\mathcal{L}$ with $[\cdot ,\cdot ]_A$ and $\mathcal{L}^A$, respectively.
\end{defn}

\begin{theorem}
If a quadruple $(A, \pi, N, \Phi )$ is a pseudo-Poisson Nijenhuis Lie algebroid, then $(A_N, d_\pi , \Phi)$ is a quasi-Lie bialgebroid, where $A_N$ is a Lie algebroid deformed by the Nijenhuis structure $N$.
\end{theorem}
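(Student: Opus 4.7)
The plan is to mirror the proof of Theorem \ref{pPN-to-qLbia} step by step, replacing the tangent bundle $TM$ by the Lie algebroid $A$, the ordinary Lie bracket and Lie derivative by $[\cdot ,\cdot ]_A$ and $\mathcal{L}^A := d_A\iota + \iota d_A$, and the identity anchor by $a:A\longrightarrow TM$. Because $N$ is a Nijenhuis structure on the Lie algebroid $A$, the deformed Lie algebroid $A_N=(A,[\cdot ,\cdot ]_N^A,a\circ N)$ is well-defined, so it suffices to verify the three defining axioms for $(A_N,d_\pi ,\Phi )$ to be a quasi-Lie bialgebroid: (a) $d_\pi $ is a degree-one derivation of the Gerstenhaber algebra $(\Gamma (\Lambda ^*A),\wedge ,[\cdot ,\cdot ]_N^A)$; (b) $d_\pi ^2=[\Phi ,\cdot ]_N^A$; and (c) $d_\pi \Phi =0$.

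For (a), what is needed is the Lie algebroid analogue of Theorem \ref{compatible_equivalent}: compatibility of $\pi $ and $N$ on $A$ is equivalent to $d_\pi $ being a derivation of $[\cdot ,\cdot ]_N^A$. Inspecting the proofs of Lemma \ref{lemma_compatible} and Theorem \ref{compatible_equivalent}, every step uses only the Leibniz rule for $[\cdot ,\cdot ]_\pi $ and $[\cdot ,\cdot ]_N$, the Cartan-type formulas defining $\mathcal{L}_\alpha ^\pi $ and $\mathcal{L}_X^N$, tensoriality of $C_\pi ^N$, and the formal properties of the Schouten bracket of an anchored bracket supplied by Theorem \ref{Schouten}; none of these invoke any feature specific to the tangent bundle, so the arguments apply verbatim. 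For (b), one repeats the calculations in the proof of Theorem \ref{pPN-to-qLbia} for $d_\pi ^2f$, $d_\pi ^2X$, $[\Phi ,f]_N^A$ and $[\Phi ,X]_N^A$ with $f\in C^\infty (M)$ and $X\in \Gamma (A)$, using the graded Jacobi identity for the Schouten bracket of $[\cdot ,\cdot ]_A$, the identities $[D,f]_A=(-1)^{k+1}\iota _{d_Af}D$ and $[D,f]_N^A=(-1)^{k+1}\iota _{N^*d_Af}D$ for $D\in \Gamma (\Lambda ^kA)$, and the intertwining $\mathcal{L}_X^N\alpha =\mathcal{L}_{NX}^A\alpha -(\mathcal{L}_X^AN^*)\alpha $. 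The conclusion is that $d_\pi ^2=[\Phi ,\cdot ]_N^A$ on $C^\infty (M)$ is equivalent to condition (\ref{(ii)}) on exact $A^*$-1-forms (which extends to $\Gamma (A^*)$ by $C^\infty (M)$-linearity), and then, under (\ref{(ii)}), $d_\pi ^2=[\Phi ,\cdot ]_N^A$ on $\Gamma (A)$ is equivalent to condition (\ref{(iii)}). Because both $d_\pi ^2$ and $[\Phi ,\cdot ]_N^A$ are derivations of $(\Gamma (\Lambda ^*A),\wedge )$, validity on $C^\infty (M)\oplus \Gamma (A)$ propagates to all of $\Gamma (\Lambda ^*A)$. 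Axiom (c) is immediate from condition (\ref{(i)}) because $d_\pi \Phi =[\pi ,\Phi ]_A$.

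The main obstacle is not conceptual but a matter of bookkeeping: one must track carefully that every manipulation in the proof of Theorem \ref{pPN-to-qLbia} survives when the underlying algebroid is $A$ rather than $TM$. The one identity that deserves explicit checking is $\mathcal{L}_X^N\alpha =\mathcal{L}_{NX}^A\alpha -(\mathcal{L}_X^AN^*)\alpha $ in the Lie algebroid setting, which follows by applying the Cartan formula for $d_A$ to the definition of $[\cdot ,\cdot ]_N^A$ and dualising via the $\Gamma (A)$--$\Gamma (A^*)$ pairing. Once this is in place, the argument is a direct transcription of the proof of Theorem \ref{pPN-to-qLbia}.
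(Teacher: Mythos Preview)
Your proposal is correct and is exactly what the paper intends: in fact the paper gives no proof at all for this statement, presenting it as a straightforward generalization of Theorem \ref{pPN-to-qLbia} to an arbitrary Lie algebroid $A$. Your careful identification of the substitutions ($[\cdot ,\cdot ]\rightsquigarrow [\cdot ,\cdot ]_A$, $\mathcal{L}\rightsquigarrow \mathcal{L}^A$, $d\rightsquigarrow d_A$) and of the one identity $\mathcal{L}_X^N\alpha =\mathcal{L}_{NX}^A\alpha -(\mathcal{L}_X^AN^*)\alpha $ worth checking explicitly is more than the paper itself provides.
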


Now we show three simple and important examples of pseudo-Poisson Nijenhuis manifolds.

\begin{example}
A triple $(\pi ,N,\Phi )$, where $\Phi=0$, is a pseudo-Poisson Nijenhuis structure if $(\pi ,N)$ is a Poisson-Nijenhuis structure.
\end{example}

\begin{example}
Let $(M, \pi )$ be a Poisson manifold and set $N=0$. For any $d_\pi $-closed $3$-vector field $\Phi $, the triple $(\pi , N, \Phi )$ is a pseudo-Poisson Nijenhuis structure. Therefore, by Theorem \ref{pPN-to-qLbia} and Example \ref{qLbia-to-Ca}, $((TM)_N, d_\pi ,\Phi )$ is a quasi-Lie bialgebroid and $((TM)_N\oplus (T^*M)_\pi ,\langle \cdot ,\cdot \rangle ,[\![\cdot ,\cdot ]\!]_\pi ^\Phi ,\rho )$ is a Courant algebroid, where the Courant bracket $[\![\cdot ,\cdot ]\!]_\pi ^\Phi$ is defined by
\begin{eqnarray*}
&[\![X ,Y ]\!]_\pi ^\Phi =[X,Y]_0=0,\\
&[\![\xi ,\eta ]\!]_\pi ^\Phi =[\xi ,\eta ]_\pi +\Phi (\xi ,\eta ,\cdot ),\\
&[\![X, \xi ]\!]_\pi ^\Phi =(\iota _Xd_0\xi +\frac{1}{2}d_0<\xi ,X>)\\
&\quad \quad \quad \quad \quad \quad -(\iota _\xi d_\pi X+\frac{1}{2}d_\pi <\xi ,X>)\\
&\quad \quad \quad \ =-\iota _\xi d_\pi X-\frac{1}{2}d_\pi <\xi ,X>,
\end{eqnarray*}
the anchor map $\rho $ satisfies $\rho (X+\xi )=NX+\pi ^\sharp \xi =\pi ^\sharp \xi $ and the pairing $\langle \cdot ,\cdot \rangle $ is given by (\ref{pairing}) for any $X,Y$ in $\mathfrak{X}(M)$, any $\xi $ and $\eta $ in $\Omega ^1(M)$. 
\end{example}

\begin{example}\label{essential}
Let $M$ be a $C^\infty $-manifold and set $N=a\cdot \mathrm{id}_{TM}$, where $a$ is a non-zero real number. For any $2$-vector field $\pi $ in $\mathfrak{X}^2 (M)$, the triple $(\pi , N, \Phi )$, where $\Phi =\frac{1}{2a}\left[\pi , \pi \right]$, is a pseudo-Poisson Nijenhuis structure. Therefore $((TM)_N, d_\pi ,\Phi )$ is a quasi-Lie bialgebroid and $(TM\oplus T^*M ,\langle \cdot ,\cdot \rangle ,[\![\cdot ,\cdot ]\!]_\pi ^\Phi ,\rho )$ is a Courant algebroid, where the Courant bracket $[\![\cdot ,\cdot ]\!]_\pi ^\Phi$ is defined by
\begin{eqnarray*}
&[\![X ,Y ]\!]_\pi ^\Phi  =[X,Y]_{a\cdot \mathrm{id}_{T\! M}}=a[X,Y],\\
&[\![\xi ,\eta ]\!]_\pi ^\Phi =[\xi ,\eta ]_\pi +\frac{1}{2a}\left[\pi , \pi \right](\xi ,\eta ,\cdot ),\\
&\quad [\![X, \xi ]\!]_\pi ^\Phi =(\iota _Xd_{a\cdot \mathrm{id}_{T\! M}}\xi +\frac{1}{2}d_{a\cdot \mathrm{id}_{T\! M}}<\xi ,X>)\\
&\quad \quad \quad \quad \quad \quad \quad \quad \quad \quad -(\iota _\xi d_\pi X+\frac{1}{2}d_\pi <\xi ,X>)\\
&=a(\iota _Xd\xi +\frac{1}{2}d<\xi ,X>)\\
&\quad \quad \quad \quad \quad \quad -(\iota _\xi d_\pi X+\frac{1}{2}d_\pi <\xi ,X>),
\end{eqnarray*}
the anchor map $\rho $ satisfies $\rho (X+\xi )=aX+\pi ^\sharp \xi $ and the pairing $\langle \cdot ,\cdot \rangle $ is given by (\ref{pairing}) for any $X,Y$ in $\mathfrak{X}(M)$, $\xi $ and $\eta $ in $\Omega ^1(M)$.
\end{example}

Example \ref{essential} is an example of not a Poisson Nijenhuis manifold but a pseudo-Poisson Nijenhuis manifold.

The following proposition means that two given pseudo-Poisson Nijenhuis  manifolds generate a new one.

\begin{prop}
Let $(M_i,\pi _i,N_i,\Phi _i)$, $i=1,2$, be pseudo-Poisson Nijenhuis manifolds. Then the product $(M_1\times M_2, \pi _1+\pi _2, N_1\oplus N_2, \Phi _1+\Phi _2)$ is a pseudo-Poisson Nijenhuis manifold.
\end{prop}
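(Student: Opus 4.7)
The plan is to verify, in order, that $N_1 \oplus N_2$ is Nijenhuis, that it is compatible with $\pi_1 + \pi_2$ in the sense of Definition \ref{compatible}, and that the triple $(\pi_1+\pi_2, N_1\oplus N_2, \Phi_1+\Phi_2)$ satisfies (\ref{(i)}), (\ref{(ii)}) and (\ref{(iii)}). Throughout, I identify $\pi_i$, $N_i$, $\Phi_i$ with their pullbacks to $M_1 \times M_2$ via the projections $p_i : M_1\times M_2 \to M_i$; under the decomposition $T(M_1\times M_2)\cong p_1^*TM_1 \oplus p_2^*TM_2$ and the dual decomposition of $T^*(M_1\times M_2)$, the tensor $N_1\oplus N_2$ acts blockwise, while $\pi_1+\pi_2$ and $\Phi_1+\Phi_2$ have components only in their own factors.

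The essential mechanism is an orthogonality principle: if $D\in\mathfrak{X}^*(M_1\times M_2)$ is pulled back from $M_1$ and $E$ is pulled back from $M_2$, then $[D,E]=0$ in the Schouten bracket, $\mathcal{L}_D E=0$, and $\iota_\alpha E=0$ for any $\alpha=p_1^*\tilde\alpha$. This follows from the fact that vector fields lifted from different factors commute. As a consequence, Schouten brackets, Lie derivatives and interior products on the product split as direct sums on decomposable arguments.

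Granted this, the Nijenhuis torsion of $N_1\oplus N_2$ on $X_1+X_2$, $Y_1+Y_2$ collapses to $\mathcal{T}_{N_1}(X_1,Y_1)+\mathcal{T}_{N_2}(X_2,Y_2)=0$. The identity $(N_1\oplus N_2)\circ(\pi_1+\pi_2)^\sharp=(\pi_1+\pi_2)^\sharp\circ(N_1\oplus N_2)^*$ is immediate from the block structure, and the concomitant $C^{N_1\oplus N_2}_{\pi_1+\pi_2}$ splits as $C^{N_1}_{\pi_1}\oplus C^{N_2}_{\pi_2}$, proving compatibility. Condition (\ref{(i)}) follows from $[\pi_1+\pi_2,\Phi_1+\Phi_2]=[\pi_1,\Phi_1]+[\pi_2,\Phi_2]=0$. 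For (\ref{(ii)}), one uses $[\pi_1+\pi_2,\pi_1+\pi_2]=[\pi_1,\pi_1]+[\pi_2,\pi_2]$ together with $C^\infty$-linearity in $\alpha,\beta$: writing $\alpha=\alpha_1+\alpha_2$, $\beta=\beta_1+\beta_2$, all mixed terms die because $\iota_{\alpha_i}[\pi_j,\pi_j]=0$ and $\iota_{\alpha_i}\Phi_j=0$ for $i\neq j$, and the aligned terms are exactly (\ref{(ii)}) on each factor.

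The main obstacle is (\ref{(iii)}), since a general $X\in\mathfrak{X}(M_1\times M_2)$ does not split globally, and the expression is not manifestly $C^\infty$-linear in $X$. I would first verify by a short Schouten-calculus computation that the combination in (\ref{(iii)}) is actually tensorial in $X$: the corrections generated by replacing $X$ with $fX$ in $\mathcal{L}_X\Phi$, $\mathcal{L}_{NX}\Phi$ and $\mathcal{L}_XN^*$ (each of the form $\iota_{df}(\cdot)\wedge(\cdot)$) cancel in the particular combination that appears. Once tensoriality is established, one may reduce to $X=X_1+X_2$ with $X_i$ pulled back from $M_i$ and $\alpha,\beta$ similarly decomposed. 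The orthogonality principle annihilates every cross term (for instance, when $X$ is on $M_1$ and $\beta$ on $M_2$, one has $\iota_\beta\Phi_1=0$, $\mathcal{L}_XN_2^*=0$ and $\iota_{\mathcal{L}_XN_2^*\beta}\Phi=0$), and the diagonal terms reproduce exactly condition (\ref{(iii)}) on each $M_i$, which holds by hypothesis. This completes the verification.
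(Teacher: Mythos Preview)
Your approach is correct and matches the paper's: both proceed by direct verification of each defining condition, using that objects pulled back from different factors have vanishing Schouten bracket, Lie derivative, and interior product against one another. One refinement worth making explicit: the tensoriality of (\ref{(iii)}) in $X$ that you invoke is not unconditional --- when you replace $X$ by $fX$, the residual terms collapse to expressions of the form $\Phi(df,\alpha,N^*\gamma)-\Phi(N^*df,\alpha,\gamma)$, and these vanish only because $\Phi(\alpha,\beta,N^*\gamma)$ is totally antisymmetric in $\alpha,\beta,\gamma$, which is exactly condition (\ref{(ii)}); since you establish (\ref{(ii)}) before turning to (\ref{(iii)}), this is available, but the dependence should be stated.
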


\begin{proof} Using the fact that $[X_1,X_2]=0$ for any $X_i$ in $\mathfrak{X}(M_i),\ i=1,2$, etc., we can see that the triple $(\pi _1+\pi _2, N_1\oplus N_2, \Phi _1+\Phi _2)$ satisfies that $N_1\oplus N_2$ is a Nijenhuis structure on $M_1\times M_2$, the compatibility of $(\pi _1+\pi _2, N_1\oplus N_2)$ and the conditions (\ref{(i)}), (\ref{(ii)}) and (\ref{(iii)}) of Definition \ref{pPN}. \qedhere
\end{proof}

\section{Pseudo-symplectic Nijenhuis manifolds}\label{Pseudo-symplectic Nijenhuis manifolds}

In this section, we always assume that a $2$-vector field $\pi $ is nondegenerate. Then we can reduce one of the conditions for a triple $(\pi ,N,\Phi )$ to be a pseudo-Poisson Nijenhuis structure. This fact is important in the sense to be able to find pseudo-Poisson Nijenhuis structures easily. Moreover we rewrite a pseudo-Poisson Nijenhuis structure $(\pi ,N,\Phi )$ of which the $2$-vector field $\pi$ is nondegenerate using differential forms, and investigate properties of the structure. 

\begin{theorem}\label{nondeg}
Let $\pi $ be a nondegenerate $2$-vector field, $N$ a Nijenhuis structure compatible with $\pi $, and $\Phi $ a $3$-vector field. If a triple $(\pi ,N,\Phi )$ satisfies the conditions (\ref{(i)}) and (\ref{(ii)}) in Definition \ref{pPN}, then $(\pi ,N,\Phi )$ is a pseudo-Poisson Nijenhuis structure, i.e., $(\pi ,N,\Phi )$ satisfies the condition (\ref{(iii)}).
\end{theorem}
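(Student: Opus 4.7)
The strategy is to reuse the analysis performed inside the proof of Theorem~\ref{pPN-to-qLbia}: once (\ref{(ii)}) holds, condition (\ref{(iii)}) is equivalent to the vanishing on $\mathfrak{X}(M)$ of the degree-$2$ operator
\[
D := d_\pi^2 - [\Phi,\cdot]_N.
\]
So under hypotheses (\ref{(i)}), (\ref{(ii)}), compatibility of $(\pi,N)$, and nondegeneracy of $\pi$, it suffices to show $D|_{\mathfrak{X}(M)} = 0$. The nondegeneracy will be used to propagate the vanishing of $D$ from $C^\infty(M)$ to $\mathfrak{X}(M)$ along $d_\pi$.

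First I would verify that $D$ is a degree-$2$ derivation of the wedge product on $\mathfrak{X}^*(M)$: $d_\pi^2$ is one because $d_\pi$ is a degree-$1$ derivation of $\wedge$ (the cross terms cancel by a standard sign calculation), and $[\Phi,\cdot]_N$ is one by the biderivation property in Theorem~\ref{Schouten}(i), since the sign $(-1)^{(\deg\Phi + 1)\deg X}=(-1)^{4\deg X}$ is always $+1$. The computation in the proof of Theorem~\ref{pPN-to-qLbia} already shows that (\ref{(ii)}) is equivalent to $Df=0$ for every $f \in C^\infty(M)$; because a $\wedge$-derivation vanishing on $C^\infty(M)$ is automatically $C^\infty(M)$-linear on $\mathfrak{X}(M)$, it follows that $D$ is tensorial on $\mathfrak{X}(M)$.

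Next I would show that $D$ commutes with $d_\pi$. Theorem~\ref{compatible_equivalent} applied to the compatible pair $(\pi,N)$ gives
\[
d_\pi[\Phi,X]_N = [d_\pi\Phi, X]_N + (-1)^{3+1}[\Phi, d_\pi X]_N = [d_\pi\Phi,X]_N + [\Phi,d_\pi X]_N,
\]
and condition (\ref{(i)}) forces $d_\pi \Phi = [\pi,\Phi]=0$, so $[\Phi,\cdot]_N$ commutes with $d_\pi$; since $d_\pi^2$ trivially does, so does $D$. Applying this to any $f \in C^\infty(M)$ yields $D(d_\pi f) = d_\pi(Df) = 0$.

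Finally, nondegeneracy of $\pi$ makes $\pi^\sharp \colon T^*M \to TM$ a bundle isomorphism, and $d_\pi f = -\pi^\sharp(df)$. Since the exact $1$-forms locally generate $\Omega^1(M)$ as a $C^\infty(M)$-module, the family $\{d_\pi f : f \in C^\infty(M)\}$ locally generates $\mathfrak{X}(M)$ as a $C^\infty(M)$-module. Combined with the $C^\infty(M)$-linearity of $D$ on $\mathfrak{X}(M)$ from the second step, this forces $DX=0$ for every $X\in\mathfrak{X}(M)$, i.e.\ condition (\ref{(iii)}). The main place where care is needed is pinning down the $\wedge$-derivation property of $[\Phi,\cdot]_N$ (so that $D$ becomes tensorial once it kills functions) and the commutation of $d_\pi$ with $[\Phi,\cdot]_N$ via compatibility; once those are in place the rest is a clean transfer of data from $C^\infty(M)$ to $\mathfrak{X}(M)$ supplied by nondegeneracy.
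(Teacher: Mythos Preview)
Your proposal is correct and follows essentially the same route as the paper's own proof: both reduce (\ref{(iii)}) to showing $d_\pi^2=[\Phi,\cdot]_N$ on $\mathfrak{X}(M)$, establish it on $C^\infty(M)$ via (\ref{(ii)}), push it to elements $d_\pi f=-\pi^\sharp df$ using compatibility (equation (\ref{pi_derivation})) together with (\ref{(i)}), and then extend to all of $\mathfrak{X}(M)$ by nondegeneracy and the derivation property. The only cosmetic difference is that you package the key step as ``$D$ commutes with $d_\pi$'' whereas the paper performs the identical computation $d_\pi^2(\pi^\sharp df)=-d_\pi[\Phi,f]_N=[\Phi,\pi^\sharp df]_N$ directly.
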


\proof We shall prove (\ref{(iii)}). By the nondegeneracy of $\pi $, the map $\pi ^\sharp :T^*M\longrightarrow TM$ is a bundle isomorphism. Therefore a set $\{\pi ^\sharp df| \ f\in C^\infty (M)\}$ generates locally the vector fields $\mathfrak{X}(M)$ as a $C^\infty (M)$-module. We have proved in Theorem \ref{pPN-to-qLbia} that the equality (\ref{(ii)}) holds if and only if $d_\pi ^2=[\Phi , \cdot ]$ holds on $C^\infty (M)$. Thus we compute, for any $f$ in $C^\infty (M)$,
\begin{eqnarray*}
d_\pi ^2(\pi ^\sharp df)&=&d_\pi ^2(-d_\pi f)=-d_\pi (d_\pi ^2f)=-d_\pi [\Phi ,f]_N\\
                        &=&-\left([d_\pi \Phi ,f]_N+[\Phi ,d_\pi f]_N\right)\\
                        &=&-[\Phi ,d_\pi f]_N=[\Phi ,\pi ^\sharp df]_N,
\end{eqnarray*}
where we use $\pi ^\sharp df=-d_\pi f$ in the first and the last step, the fourth equality follows from (\ref{pi_derivation}) and the fifth equality does from (\ref{(i)}). Therefore $d_\pi ^2=[\Phi , \cdot ]$ holds on the set $\{\pi ^\sharp df| \ f\in C^\infty (M)\}$. Since $d_\pi ^2=[\Phi , \cdot ]$ holds on $C^\infty (M)\oplus \{\pi ^\sharp df| \ f\in C^\infty (M)\}$ and since both $d_\pi ^2$ and $[\Phi , \cdot ]_N$ are derivatives on $(\Gamma (\Lambda ^*TM),\wedge )$, we obtain that $d_\pi ^2=[\Phi , \cdot ]$ holds on $\mathfrak{X}(M)$. This is equivalent to the condition (\ref{(iii)}) under the assumption of (\ref{(ii)}), so that the proof has been completed. \qedhere \\

In general, it is easier to deal with differential forms than multi-vector fields. Since a $2$-vector field $\pi $ is nondegenerate, there is a unique $2$-form $\omega $ corresponding with $\pi $. Hence it is convenience to transliterate conditions (\ref{(i)}) and (\ref{(ii)}) for $\pi $ into those for $\omega $. We compute
\begin{eqnarray}
&\frac{1}{2}[\pi ,\pi ](\alpha ,\beta ,\gamma )=-d\omega(\pi ^\sharp\alpha ,\pi ^\sharp\beta ,\pi ^\sharp\gamma ),\\
&<N\iota _{\alpha \wedge \beta }\Phi ,\gamma >=<N^*\iota _{\pi ^\sharp \alpha \wedge \pi ^\sharp \beta }(\omega ^\flat \Phi ),\pi ^\sharp \gamma >
\end{eqnarray}
for any $\alpha ,\beta $  and $\gamma $ in $\Omega^1(M)$, where a bundle map $\omega ^\flat : TM\longrightarrow T^*M$ is defined by $<\omega^\flat X,Y>:=\omega (X,Y)$. Therefore setting $\phi: =-\omega ^\flat \Phi $, we obtain the equivalence of the condition (\ref{(ii)}) and
\begin{eqnarray}\label{(ii)'}
\iota _{X\wedge Y}d\omega=N^*\iota _{X\wedge Y}\phi\ (X,Y \in \mathfrak{X}(M))
\end{eqnarray}
due to the nondegeneracy of $\pi $. Under the assumption of (\ref{(ii)'}), we calculate 
\begin{eqnarray}\label{(i)'}
[\pi ,\Phi ](\alpha _1,\alpha _2,\alpha _3,\alpha _4)=-(d\phi )(\pi ^\sharp \alpha _1,\pi ^\sharp \alpha _2,\pi ^\sharp \alpha _3,\pi ^\sharp \alpha _4)
\end{eqnarray}
for any $\alpha _i$ in $\Omega ^1(M)$. From the above, we see that the conditions (\ref{(i)}) and (\ref{(ii)}) are equivalent to the condition (\ref{(ii)'}) and the closedness of $\phi $ if $\pi $ is nondegenerate. Therefore we define as follows:

\begin{defn}\label{pseudo-symplectic Nijenhuis manifold}
Let $M$ be a $C^\infty $-manifold, $\omega $ a nondegenerate $2$-form on $M$, a $(1,1)$-tensor $N$ a Nijenhuis structure compatible with the nondegenerate $2$-vector field $\pi $ corresponding to $\omega $, and $\phi $ a closed $3$-form on $M$. Then a triple $(\omega , N, \phi )$ is a {\it pseudo-symplectic Nijenhuis structure} on $M$ if the condition (\ref{(ii)'}) holds. The quadruple $(M, \omega , N, \phi )$ is called a {\it pseudo-symplectic Nijenhuis manifold}. Obviously, $(M, \omega , N, \phi )$ is a pseudo-symplectic Nijenhuis manifold if and only if $(M, \pi ,N, \pi ^\sharp \phi )$ is a pseudo-Poisson Nijenhuis manifold.
\end{defn}

The following corollary states that we can construct new pseudo-symplectic Nijenhuis structures from a {\it symplectic Nijenhuis structure} $(\omega ,N)$, i.e., a pair $(\pi ,N)$ is a Poisson Nijenhuis structure, where $\pi $ is a nondegenerate Poisson structure corresponding to the symplectic structure $\omega $.

\begin{corollary}\label{s-N}
Let $(M,\omega ,N)$ be a symplectic Nijenhuis manifold and $\phi $ a closed $3$-form satisfies $\iota _{N\!X}\phi =0$ for any $X$ in $\mathfrak{X}(M)$. Then $(M,\omega ,N,\phi )$ is a pseudo-symplectic Nijenhuis manifold.
\end{corollary}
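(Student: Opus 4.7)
The plan is to verify directly that condition (\ref{(ii)'}) holds under the given hypotheses; everything else needed to apply Definition \ref{pseudo-symplectic Nijenhuis manifold} is part of the assumption (namely $\omega$ nondegenerate, $(\omega,N)$ symplectic Nijenhuis so that $N$ is Nijenhuis and compatible with the Poisson structure corresponding to $\omega$, and $\phi$ closed).

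First I would handle the left-hand side of (\ref{(ii)'}). Since $(M,\omega,N)$ is a symplectic Nijenhuis manifold, $\omega$ is symplectic, hence $d\omega=0$. Therefore $\iota_{X\wedge Y}d\omega=0$ for every $X,Y\in\mathfrak{X}(M)$, and it remains only to show that the right-hand side $N^{*}\iota_{X\wedge Y}\phi$ also vanishes identically.

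For the right-hand side, I would expand the pairing with an arbitrary vector field $Z$. By the definition $\iota_{X\wedge Y}\phi=\iota_Y\iota_X\phi$, one has $(\iota_{X\wedge Y}\phi)(Z)=\phi(X,Y,Z)$, and hence
\begin{equation*}
(N^{*}\iota_{X\wedge Y}\phi)(Z)=(\iota_{X\wedge Y}\phi)(NZ)=\phi(X,Y,NZ).
\end{equation*}
Using skew-symmetry of the $3$-form $\phi$, this equals $\phi(NZ,X,Y)=(\iota_{NZ}\phi)(X,Y)$, which vanishes by the hypothesis $\iota_{NX}\phi=0$ for every $X\in\mathfrak{X}(M)$ (applied with $X$ replaced by $Z$). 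Since $Z$ is arbitrary, $N^{*}\iota_{X\wedge Y}\phi=0$, so (\ref{(ii)'}) holds with both sides equal to zero.

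There is no real obstacle here: the only subtle point is bookkeeping the sign/index convention so that the contraction hypothesis $\iota_{NX}\phi=0$ can be brought into play via antisymmetry of $\phi$. Once (\ref{(ii)'}) is established, Definition \ref{pseudo-symplectic Nijenhuis manifold} is satisfied, so $(M,\omega,N,\phi)$ is a pseudo-symplectic Nijenhuis manifold, completing the proof.
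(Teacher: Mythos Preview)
Your proof is correct and follows essentially the same approach as the paper: both reduce (\ref{(ii)'}) to $N^{*}\iota_{X\wedge Y}\phi=0$ using $d\omega=0$, and then verify this by pairing with an arbitrary $Z$ and rewriting the result as $(\iota_{NZ}\phi)(X,Y)$, which vanishes by hypothesis.
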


\proof In this case, the condition (\ref{(ii)'}) to prove is
\begin{eqnarray}\label{symp_(ii)'}
N^*\iota _{X\wedge Y}\phi =0\ (X,Y\in \mathfrak{X}(M))
\end{eqnarray}
because of $d\omega =0$. By computing that, for any $Z$ in $\mathfrak{X}(M)$,
\begin{eqnarray*}
<N^*\iota _{X\wedge Y}\phi ,Z>
                              =(\iota _{NZ}\phi )(X,Y)=0,
\end{eqnarray*}
where we use $\iota _{NX}\phi =0$, we conclude that (\ref{symp_(ii)'}) holds. Hence $(\omega ,N,\phi )$ is a pseudo-symplectic Nijenhuis structure. \qedhere \\

\begin{example}
On the $6$-torus $\mathbb{T}^6$ with angle coordinates $(\theta _1, \theta _2, \theta _3, \theta _4, \theta _5, \theta _6)$, we consider the standard symplectic structure $\omega :=d\theta _1\wedge d\theta _2+d\theta _3\wedge d\theta _4+d\theta _5\wedge d\theta _6$
 and a regular Poisson structure with rank $2$,
\begin{eqnarray*}
\pi_{\lambda } :=\frac{\partial}{\partial \theta _a}\wedge \left(\frac{\partial}{\partial \theta _b}+\lambda \frac{\partial}{\partial \theta _c}\right),
\end{eqnarray*}
where $\lambda $ is in $\mathbb{R}$ and $a, b$ and $c$ are three distinct numbers(see \cite{LM}). Setting $N_{\lambda }:=\pi_{\lambda } ^\sharp \circ \omega ^\flat$, we obtain a symplectic Nijenhuis structure $(\omega , N_{\lambda })$ on $\mathbb{T}^6$ (see \cite{V2} for a general theory of constructing symplectic Nijenhuis structures from symplectic and Poisson structures). Since the rank of $N_{\lambda }$ is $2$ at each points, the kernel of $N_{\lambda }^*$ is a subbundle with rank $4$ of the cotangent bundle of $\mathbb{T}^6$. Hence for any closed $3$-form $\phi $ in $\Gamma (\Lambda ^3\mathrm{Ker}N_{\lambda }^*)$, a triple $(\omega , N_{\lambda },\phi )$ is a pseudo-symplectic Nijenhuis structure on $\mathbb{T}^6$ by Corollary \ref{s-N}.
\end{example}

The following simple example is of a pseudo-symplectic Nijenhuis structure but not of a symplectic Nijenhuis structure.

\begin{example}
Let $(x^1,x^2.x^3,x^4)$ be the canonical coordinates in $\mathbb{R}^4$ and $f,g$ in $C^\infty (\mathbb{R})$ not constant but non-vanishing functions. We set
\begin{eqnarray*}
N:=\left( 
\begin{matrix}
 N^1_1 & \frac{(N^1_1-N^3_3)^2}{N^1_2} & 0     & 0\\
 N^1_2 & N^1_1                         & 0     & 0\\
 0     & 0                             & N^3_3 & \frac{(N^1_1-N^3_3)^2}{N^3_4}\\
 0     & 0                             & N^3_4 & N^3_3\\
\end{matrix}
\right),
\end{eqnarray*}
where $N^i_j$'s are in $\mathbb{R}^\times$ and  satisfy that $N^1_1\neq N^3_3$,
\begin{eqnarray*}
\omega:=f(a_3x^3+a_4x^4)dx^1\wedge dx^2+g(a_1x^1+a_2x^2)dx^3\wedge dx^4,
\end{eqnarray*}
where $a_i$'s satisfy $a_3:a_4=N^3_4:(N^1_1-N^3_3)$ and $a_1:a_2=N^1_2:(N^1_1-N^3_3)$, and
\begin{eqnarray*}
\phi \!&:=&\!(N^1_1)^{-1}f'(a_3x^3+a_4x^4)dx_1\wedge dx_2\wedge (a_3dx^3+a_4dx^4)\\
&\quad &\quad \quad \quad \quad +(N^3_3)^{-1}g'(a_1x^1+a_2x^2)(a_1dx^1+a_2dx^2)\wedge dx_3\wedge dx_4.
\end{eqnarray*}
Then $(\omega ,N,\phi )$ is a pseudo-symplectic Nijenhuis structure on $\mathbb{R}^4$.
\end{example}

Finally we describe a proterty of pseudo-symplectic Nijenhuis structures. This is the main theorem in this section.

\begin{theorem}
Let $(\omega ,N,\phi )$ be a pseudo-symplectic Nijenhuis structure on $M$ and $\pi $ the nondegenerate $2$-vector field corresponding to $\omega $. Then $(\pi _N,\phi )$ is a twisted Poisson structure \cite{SW}, i.e., the pair satisfies
\begin{eqnarray*}
&\frac{1}{2}[\pi _N,\pi _N]=\pi _N^\sharp \phi ,\\
&d\phi =0. 
\end{eqnarray*}
\end{theorem}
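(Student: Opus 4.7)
The condition $d\phi = 0$ is part of the definition of pseudo-symplectic Nijenhuis structure, so nothing needs to be shown for it; the real content is $\tfrac{1}{2}[\pi_N,\pi_N] = \pi_N^\sharp \phi$. I plan to verify this by evaluating both sides as $3$-vector fields on an arbitrary triple $\alpha, \beta, \gamma \in \Omega^1(M)$ and reducing the identity to consequences of the pseudo-Poisson Nijenhuis data underlying $(\omega, N, \phi)$.

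By Definition \ref{pseudo-symplectic Nijenhuis manifold}, the quadruple $(M, \pi, N, \pi^\sharp\phi)$ is a pseudo-Poisson Nijenhuis manifold, so writing $\Phi := \pi^\sharp\phi$, the three conditions (\ref{(i)})--(\ref{(iii)}) of Definition \ref{pPN} all hold. Condition (\ref{(ii)}) reads $\tfrac{1}{2}[\pi,\pi](\alpha,\beta,\gamma) = \Phi(\alpha,\beta,N^*\gamma)$, and since $[\pi,\pi]$ is a $3$-vector its cyclic symmetry furnishes the companion identities $\Phi(\alpha,\beta,N^*\gamma) = \Phi(\beta,\gamma,N^*\alpha) = \Phi(\gamma,\alpha,N^*\beta)$. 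Meanwhile, using $\omega^\flat\pi^\sharp = \mathrm{id}$ and $\phi = -\omega^\flat\Phi$ (as derived just before Definition \ref{pseudo-symplectic Nijenhuis manifold}), the right-hand side can be rewritten as $(\pi_N^\sharp\phi)(\alpha,\beta,\gamma) = \phi(N\pi^\sharp\alpha, N\pi^\sharp\beta, N\pi^\sharp\gamma)$ and further transformed using the compatibility $N\pi^\sharp = \pi^\sharp N^*$ into an expression purely in $\Phi$ and iterated $N^*$'s.

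For the left-hand side, my plan is to apply the hierarchy identity of Proposition \ref{hierarchy}, which with $k=l=0$ and $Q=\pi_1=\pi_N$ yields $[\pi_N,\pi_N] = [\pi,\pi_N]_N$. I then expand $[\pi,\pi_N]_N$ using that $d_\pi$ is a derivation of the Schouten bracket $[\cdot,\cdot]_N$ (Theorem \ref{compatible_equivalent}(iii)), the identity $[NX,NY] = N[X,Y]_N$ coming from the vanishing of the Nijenhuis torsion, and the compatibility identity $C_\pi^N = 0$ which controls $[\alpha,\beta]_{\pi_N}$ in terms of $[\cdot,\cdot]_\pi$ and $N^*$. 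Each $[\pi,\pi]$-contraction that appears along the way can be rewritten through condition (\ref{(ii)}) in terms of $N\iota\Phi$, and the cyclic symmetries above then reassemble the terms into $\phi(N\pi^\sharp\alpha, N\pi^\sharp\beta, N\pi^\sharp\gamma)$. The main obstacle will be the algebraic bookkeeping: tracking the accumulation of factors of $N$ and $N^*$ as they migrate across $\pi^\sharp$, and checking that the cyclic combinations of $\Phi$-terms produce exactly the required symmetric expression.
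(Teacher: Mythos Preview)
Your plan is sound but takes a genuinely different route from the paper. The paper's proof is almost entirely structural: it invokes Theorem~\ref{pPN-to-qLbia} to obtain the quasi-Lie bialgebroid $((TM)_N,d_\pi,\Phi)$ with $\Phi=\pi^\sharp\phi$, and then cites two results from \cite{ILX} (Proposition~4.8 and Lemma~2.32) which say that the $2$-differential $d_\pi$ on $(TM)_N$ induces a bivector $\pi_M$ on $M$ satisfying $\tfrac{1}{2}[\pi_M,\pi_M]=N\Phi$, and that $\pi_M=\pi_N$. The conclusion $\tfrac{1}{2}[\pi_N,\pi_N]=N\pi^\sharp\phi=\pi_N^\sharp\phi$ is then immediate. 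No computation with the Schouten bracket on $1$-forms is performed.

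Your approach, by contrast, is a direct verification and is self-contained within the paper---it does not need the \cite{ILX} machinery. The ingredients you list (Proposition~\ref{hierarchy}, Nijenhuis torsion, $C_\pi^N=0$, condition~(\ref{(ii)}) and its cyclic consequences) are exactly the right ones, and the bookkeeping does close up. A streamlining worth considering: rather than routing through $[\pi,\pi_N]_N$ and the derivation property of Theorem~\ref{compatible_equivalent}(iii) (whose role in your expansion is not entirely clear), you can use the tensorial identity $\pi_N^\sharp[\alpha,\beta]_{\pi_N}-[\pi_N^\sharp\alpha,\pi_N^\sharp\beta]=\tfrac{1}{2}\iota_{\alpha\wedge\beta}[\pi_N,\pi_N]$ directly. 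Substituting $[\alpha,\beta]_{\pi_N}=[\alpha,\beta]_\pi^{N^*}$ (from $C_\pi^N=0$), $[N\pi^\sharp\alpha,N\pi^\sharp\beta]=N[\pi^\sharp\alpha,\pi^\sharp\beta]_N$ (Nijenhuis), and $N\pi^\sharp=\pi^\sharp N^*$, the left side becomes a combination of three copies of the analogous defect for $\pi$, hence three instances of $\tfrac{1}{2}\iota[\pi,\pi]=N\iota\Phi$ by~(\ref{(ii)}). The resulting expression $\Phi(N^*\alpha,\beta,(N^*)^2\gamma)+\Phi(\alpha,N^*\beta,(N^*)^2\gamma)-\Phi(\alpha,\beta,(N^*)^3\gamma)$ collapses to $\Phi(N^*\alpha,N^*\beta,N^*\gamma)=(\pi_N^\sharp\phi)(\alpha,\beta,\gamma)$ precisely via the cyclic identities you already extracted from~(\ref{(ii)}). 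Either way, what you buy over the paper's argument is independence from \cite{ILX}; what you pay is the explicit algebra.
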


\begin{proof}
By Definition \ref{pseudo-symplectic Nijenhuis manifold}, we obtain $d\phi =0$. By Theorem \ref{pPN-to-qLbia}, $((TM)_N,d_\pi ,\Phi)$, where $\Phi:=\pi ^\sharp \phi$, is a quasi-Lie bialgebroid. Because of Proposition 4.8 in \cite{ILX}, the $2$-vector field $\pi _M$ on $M$ induced by the $2$-differential $d_\pi $ on $(TM)_N$ satisfies $\frac{1}{2}[\pi _M,\pi _M]=N\Phi $. Moreover, since we see that $\pi _M$ coincides with $\pi _N$ using Lemma 2.32 in \cite{ILX}, we have
\begin{eqnarray*}
\frac{1}{2}[\pi _N,\pi _N]=N\Phi =N(\pi ^\sharp \phi )=\pi _N^\sharp \phi .
\end{eqnarray*}
Therefore $(\pi _N, \phi )$ is a twisted Poisson structure on $M$. \qedhere 
\end{proof}

The property of a pseudo-symplectic Nijenhuis structure can be considered to be a generalization of the first step of the hierarchy of a Poisson Nijenhuis structure since the pair $(\pi _N,N)$ is compatible due to Proposition \ref{hierarchy}. We can obtain integrable systems by the hierarchy of a Poisson Nijenhuis structure. It is interesting to find apprications of psudo-symplectic (or of course, pseudo-Poisson) Nijenhuis structures to integrable systems.

\end{document}